\numberwithin{equation}{section}
\newcommand{\R}{\mathbb{R}}
\renewcommand{\le}{\leqslant}
\renewcommand{\ge}{\geqslant}
\renewcommand{\leq}{\leqslant}
\renewcommand{\geq}{\geqslant}
\newcommand{\be}{\begin{equation}}
\newcommand{\en}{\end{equation}}
\newcommand{\ee}{\end{equation}}
\newcommand{\bt}{\begin{theorem}}
\newcommand{\et}{\end{theorem}}
\newcommand{\bp}{\begin{proof}}
\newcommand{\ep}{\end{proof}}
\newcommand{\bc}{\begin{cor}}
\newcommand{\ec}{\end{cor}}
\newcommand{\bl}{\begin{lemma}}
\newcommand{\el}{\end{lemma}}
\newcommand{\bprop}{\begin{prop}}
\newcommand{\eprop}{\end{prop}}
\newcommand{\N}{\mathbb{N}}
\newenvironment{customthm}[1]
{\innercustomthm}
{\endinnercustomthm}
\newtheorem{theorem}{Theorem}[section]
\newtheorem{remark}{Remark}
\newtheorem{lemma}[theorem]{Lemma}
\newtheorem{corollary}[theorem]{Corollary}
\numberwithin{theorem}{section} \numberwithin{definition}{section}
\newcommand{\RNum}[1]{\uppercase\expandafter{\romannumeral #1\relax}}
\def\R{\mathbb{R}}
\def\N{\mathbb{N}}
\newcommand{\vertiii}[1]{{\left\vert\kern-0.25ex\left\vert\kern-0.25ex\left\vert #1 
		\right\vert\kern-0.25ex\right\vert\kern-0.25ex\right\vert}}
\theoremstyle{definition}
\author[A. Mu\~noz]{Alexander Mu\~noz}
\address{IMECC-UNICAMP, Rua S\'ergio Buarque de Holanda, 651, 13083-859, Cam\-pi\-nas-SP, Bra\-zil}
\email{alexd@ime.unicamp.br}
\author[A. Pastor]{Ademir Pastor}
\address{IMECC-UNICAMP, Rua S\'ergio Buarque de Holanda, 651, 13083-859, Cam\-pi\-nas-SP, Bra\-zil}
\email{apastor@ime.unicamp.br}
\thanks{}
\subjclass[2020]{35Q53, 35R25}
\date{}
\title[Relation decay-regularity of solutions to the mKdV equation]{On the relation between decay and regularity of solutions to the modified Korteweg-de Vries equation}
\keywords{Weighted Sobolev spaces, polynomial decay, local well-posedness}
\begin{document}

\begin{abstract} 
This work is devoted to study the relation between the regularity index  and the decay for solutions of the modified Korteweg-de Vries equation in weighted Sobolev spaces. We show if a solution belongs to a Sobolev space and has a large polynomial decay, compared to the regularity, then  actually it is more regular than initially considered.
\end{abstract}

\maketitle

\section{Introduction}\label{intro}
The existence of solutions to nonlinear dispersive equations have been studied for several years as a direct application of techniques in which the Fourier transform is one of the main ingredients. 
    The best suited space in terms of Fourier transform is the well known Schwartz class in which functions display both, high regularity and fast decay. This relation is also presented by the Fourier transform in which derivatives in space become weights in the frequency space.
    
    One natural question that arises is if a certain decay is preserved by the flow of a dispersive model. Here, we will investigate the relation between the regularity and the decay of solutions. Particular attention will be given to the following Cauchy problem
\begin{equation} \label{ivp}
	\begin{cases}
		\partial_tu+\partial_x^3 u+u^k\partial_x u=0, \quad x\in\mathbb{R},\;t\in\mathbb{R},\\
		u(x,t)=u_0(x),
	\end{cases}
\end{equation}
where $k=1,2$.  If $k=1$ we have the Kortweg-de Vries (KdV) equation and in case $k=2$ we obtain the modified KdV (mKdV) equation. The KdV equation was derived in \cite{kort} as a model describing the propagation of waves in one dimensional dispersive media. This equation has been widely studied in the literature, see for instance \cite{coliander}, \cite{KATO}, \cite{KPV} and the references therein. 

Our main interest here will be on the mKdV equation. Local and global well-posedness in $H^s(\R)$, $s\geq1/4$, for such equation was obtained in \cite{coliander}, \cite{critical}, \cite{KPV}, \cite{kishimoto}. Recently, this index was pushed down in \cite{visan}, where the authors established the (sharp) global  well-posedness in $H^s(\R)$, for $s>-1/2$ in the sense that the solution map extends uniquely from Schwartz space
to a  continuous map in $H^s(\R)$.

Denote with $U(t)f$ the unitary group associated to the linear part of \eqref{ivp}, that is,
\begin{equation}
    \label{semigrupo}
    U(t)f(x):= \left( e^{i t \xi^3} \widehat{f}(\xi)\right)^\vee (x).
\end{equation}
Taking into account the Duhamel formulation of solutions \begin{equation}
    \label{integralequation}
    u(x,t)=U(t)u_0(x)-\int_0^t U(t-t')u(x,t')^k\partial_x u(x,t')dt',
\end{equation} to study expressions of the form $|x|^bu(x,t)$ it is necessary to understand the relation between the unitary group $U(t)$ and the weights $|x|^b$. In \cite{KATO}, using the operators $\Gamma_t:=x-3t\partial_x^2$ and $L:=\partial_t+\partial_x^3$, Kato showed that $[\Gamma_t,L]=0$ and used this to note that both, $U(t)xu_0$ and $\Gamma_tU(t)u_0$, are solutions of the problem
\begin{equation*}
\begin{cases}
    Lu=0,\\u(0)=xu_0.
\end{cases}\end{equation*}
Therefore, it follows that $U(t)xu_0=\Gamma_tU(t)u_0$. The latter can be translated into the formula
\begin{equation}\label{pointwisetroca}
    xU(t)u_0(x)=U(t)xu_0(x)+3tU(t)(\partial_x^2 u_0)(x).
\end{equation}
From this, it is expected that the relation between the regularity index $s$ and the decay $b$ of a solution, at least in the case where these parameters are integer numbers, is $s\ge 2b$. 

Recently,  in \cite{FLP}, using one version of the Stein derivative (which does not directly depend on the Fourier transform), the formula in \eqref{pointwisetroca} was generalized to weights with fractional powers. It was proven that for $b\in(0,1)$ we have
\begin{equation}\label{temp}
        |x|^bU(t)u_0(x)=U(t)(|x|^bu_0)(x)+U(t)\{\Phi_{t,b}(\widehat{u}_0(\xi)) \}^\vee (x),
    \end{equation}
where the residual term $\{\Phi_{t,b}(\widehat{u}_0(\xi)) \}^\vee$ is well behaved in $L^2(\R)$ in the sense detailed in Theorem \ref{troca} below. Then, we can also expect the relation $s\ge 2b$ for weights with fractional powers. 

For a formulation analogous to \eqref{temp} for a wide family of dispersive models and several dimensions we refer the reader to Theorem 1.1 in \cite{AA}.

The main topic of this work is then to establish the fact the relation $s\ge 2b$ is optimal for the mKdV equation in the sense that if the solution has a decay that exceeds $s/2$, say, $b=s/2+\varepsilon$, then  actually the solution is more regular than initially considered. In particular, the index of regularity would be $s+2\varepsilon$. This will be cleared up in the statement of our main result and its consequences. (See for instance Corollary \ref{color} below).

The first result in this direction is due to Isaza, Linares and Ponce \cite{ILP2013} for the KdV equation. It reads as follows:
\begin{customthm}{A}\label{ilp}
    Let $u\in C(\R, L^2(\R))$ be the global solution  of the IVP \eqref{ivp} with $k=1$. If there exist $\alpha>0$ and two different times $t_0,\ t_1\in \R$ such that 
    \begin{equation*}
        |x|^\alpha u(x,t_0),\ |x|^\alpha u(x, t_1) \in L^2(\R),
    \end{equation*}
    then $u\in C(\R, H^{2\alpha}(\R))$.
\end{customthm}

An application of the same principle used in the proof of Theorem \ref{ilp} to the fifth-order KdV equation \begin{equation*}
    \partial_t u+\partial_x^5 u + u\partial_x u =0
\end{equation*} was done by Bustamante, Jim\'enez and Mej\'ia in \cite{5kdv}.

The proof of Theorem \ref{ilp} consists in a bootstrap argument depending on the size of $\alpha$. In each step the authors perform energy estimates in an accurate way to obtain decay for the solution and its derivative. For instance, in the first step  (assuming $\alpha\in(0,1/2]$) they established that for almost every $t\in [t_0,t_1]$ it follows that $\partial_x u(t)\langle x\rangle^{\alpha-1/2}\in L^2(\R)$ and for all $t\in [t_0,t_1]$, $u(t)\langle x\rangle^{\alpha}\in L^2(\R)$. By considering $f:=\langle x\rangle^{\alpha-1/2} u(t^*)$, for some $t^*\in[t_0,t_1]$, the latter implies that $Jf$ and $\langle x\rangle^{1/2}f$ are in $L^2(\R)$. Using interpolation, the following bound was obtained:
\begin{equation}
    \label{unito} \left\|J^{\theta}\left( \langle x\rangle^{(1-\theta)\frac{1}{2}}f \right) \right\|_2\le c\|Jf\|_{2}^{\theta} \|\langle x\rangle^{1/2}f\|_2^{1-\theta}, \ \ \ \ \theta\in (0,1).
\end{equation}
By requiring that $(1-\theta)/2 + \alpha -1/2 =0$, that is, $\theta=2\alpha$ they proved that $u(t^*)\in H^{2\alpha}(\R)$. The idea is then to repeat the above procedure in each step with an interval of length $1/2$ for $\alpha$.



\subsection{Main Result}\hfill

Based on the proof of Theorem \ref{ilp} and taking into account some ideas from Theorem 1.6 in \cite{5kdv}, we establish the following result.

\begin{theorem}\label{teo0}
Let $v_0\in H^{1/4}(\R)$. Let $v\in C\left([0,T];H^{1/4}(\R)\right)$ be the solution of the IVP \eqref{ivp} with $k=2$ provided by Theorem \ref{localtheorymkdv}. Assume there exist $t_0, t_1\in[0,T]$ with $t_0<t_1$ and $\alpha>0$ such that \begin{equation}
    \label{twotimes} |x|^\alpha v(t_0)\in L^2(\R) \mbox{ and } |x|^\alpha v(t_1) \in L^2(\R). 
\end{equation} Then $v\in C\left([0,T]; H^{2\alpha}(\R)\right)$.
\end{theorem}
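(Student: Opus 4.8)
The plan is to follow the bootstrap scheme of Theorem \ref{ilp}, adapted to the cubic nonlinearity $v^2\partial_x v$ and to the lower base regularity $H^{1/4}(\R)$ at which the local theory of Theorem \ref{localtheorymkdv} operates. The relation $s=2b$ suggested by \eqref{pointwisetroca} and its fractional counterpart \eqref{temp} dictates the arithmetic: decay of order $\alpha$ should buy regularity of order $2\alpha$. Accordingly, I would argue by induction on $\alpha$, advancing the decay parameter in increments of $1/2$ (so that the regularity advances in increments of $1$), and reduce the whole statement to a single inductive step that simultaneously (i) propagates the weighted bound $|x|^\alpha v(t)\in L^2(\R)$ to all $t\in[0,T]$, (ii) produces the gain $\langle x\rangle^{\alpha-1/2}\partial_x v(t)\in L^2(\R)$ for a.e.\ $t$, and (iii) converts (i)--(ii) into $v(t)\in H^{2\alpha}(\R)$ by interpolation.

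For the core estimate I would run weighted energy estimates against a family of truncated, bounded, monotone smooth weights $w_N$ increasing to $\langle x\rangle^{2\beta}$, so that every integration by parts below is legitimate and the final bounds follow by monotone convergence as $N\to\infty$. Multiplying \eqref{ivp} (with $k=2$) by $w_N v$ and integrating, the Airy part $\partial_x^3 v$ contributes, through the commutator $[\partial_x^3,w_N]$, a nonnegative local-smoothing term comparable to $\int \langle x\rangle^{2\beta-1}|\partial_x v|^2\,dx$, together with lower-order terms; this is precisely the mechanism that trades one power of weight for one derivative. The cubic term is handled by integrating by parts,
\[
\int w_N\, v^3\,\partial_x v\,dx = -\tfrac14\int w_N'\, v^4\,dx,
\]
and then bounding the right-hand side by $\|v(t)\|_\infty^2$ times a weighted $L^2$ norm of $v$; here I would feed in the Strichartz, Kato smoothing and maximal-function estimates carried by the local theory to control $\|v\|_\infty$ (or an equivalent space-time mixed norm) despite only $H^{1/4}$ regularity being available. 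Gronwall's inequality then yields (i), while integrating the smoothing term over $[t_0,t_1]$ and using that the weighted norm is finite at the two times in \eqref{twotimes} forces the smoothing integral to be finite, giving (ii).

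With (i) and (ii) in hand I would fix $t^*\in[t_0,t_1]$ at which the gain (ii) holds and set $f=\langle x\rangle^{\alpha-1/2}v(t^*)$. Then $\langle x\rangle^{1/2}f\in L^2(\R)$ by (i) and $Jf\in L^2(\R)$ by (ii), so the interpolation inequality \eqref{unito} with $\theta=2\alpha$ (chosen so that $(1-\theta)/2+\alpha-1/2=0$, whence $\langle x\rangle^{(1-\theta)/2}f=v(t^*)$) gives $v(t^*)\in H^{2\alpha}(\R)$. The newly gained regularity is then propagated to the whole interval $[0,T]$ by re-running the well-posedness theory at the higher Sobolev level, and the inductive step is repeated on the next window of length $1/2$ until the target $2\alpha$ is reached; the initial window, which starts from $H^{1/4}$ rather than $L^2$, is where the fractional weight bookkeeping via \eqref{temp} enters.

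The main obstacle I anticipate is the weighted energy estimate for the cubic nonlinearity at the delicate regularity $H^{1/4}$: in contrast with the KdV case of Theorem \ref{ilp}, where $L^2$ well-posedness makes the analogous term immediate, here the $L^\infty$ (or mixed-norm) control of $v$ needed to absorb $\int w_N'\, v^4$ is not furnished by the base space alone and must be extracted from the auxiliary norms of the local theory, with constants that stay uniform in the truncation parameter $N$. A secondary difficulty is the careful matching of fractional weights to fractional derivatives across the induction, for which the exchange formula \eqref{temp} and the good $L^2$ behavior of its residual term are the right tools; the truncation and limiting arguments that justify the integrations by parts are technical but routine.
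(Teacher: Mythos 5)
Your outline matches the paper on the first window $\alpha\in(0,1/2]$: truncated odd weights $\phi_N$, the energy identity with the smoothing term $\tfrac32\int(\partial_x v)^2\phi_N'$, the quartic term killed by $-\tfrac14\int \phi_N' v^4$ and the $L^4_xL^\infty_T$ norm of the local theory, then Lemma \ref{lemma} with $f=\langle x\rangle^{\alpha-1/2}v(t^*)$ and $\theta=2\alpha$. But there is a genuine gap for $\alpha>1/2$, and it is precisely the one the paper flags in the introduction. Your inductive step, as written, ends with ``interpolate \eqref{unito} with $\theta=2\alpha$,'' which is impossible once $\alpha>1/2$ since Lemma \ref{lemma} requires $\theta\in(0,1)$; and ``repeat the step on the next window'' is left unspecified. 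The natural repetition --- raise the regularity to $J^{r+1}$ but keep $f=\langle x\rangle^{\tilde\alpha-1/2}v$ with $\tilde\alpha=\alpha-r/2$ --- forces $\theta=2\alpha-r$ and yields only $(r+1)(2\alpha-r)$ derivatives, which is strictly below the target $2\alpha$ except at the endpoint $\alpha=(r+1)/2$. The paper's key extra idea, absent from your proposal, is to propagate \emph{decay for the derivatives themselves}: one proves (see \eqref{bootstrap1} and the inductive scheme \eqref{induc}) that $\langle x\rangle^{\tilde\alpha}\partial_x^{r}v(t)$ and $\langle x\rangle^{\tilde\alpha-1/2}\partial_x^{r+1}v(t)$ lie in $L^2(\R)$ for a.e.\ $t$, by running the weighted energy estimate on $\partial_x^r$ of the equation against weights $\phi_{r,N}$ built from $\langle x\rangle^{2\alpha-r}$, and then interpolates with $f=\langle x\rangle^{\tilde\alpha-1/2}\partial_x^{r}v(t^*)$ and $\theta=2\tilde\alpha$, landing exactly on $J^{2\tilde\alpha}\partial_x^r v(t^*)\in L^2(\R)$, i.e.\ $H^{2\alpha}(\R)$.

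This structural change also generates the hardest estimates of the paper, which your outline does not anticipate: once $\phi_N'$ is no longer bounded uniformly in $N$, the commutator/nonlinear bookkeeping produces weighted maximal-type quantities such as $\|v_m^2\phi_N^{1/2}\|_{L^2_xL^\infty_{t_1}}$ (see \eqref{c9} and \eqref{c14s}), which are controlled through the Duhamel formula, the exchange formula \eqref{troca2} with the residual bounds \eqref{residualestimate}--\eqref{residualderiv}, the maximal estimate \eqref{ww1}, and the weighted local theory of Theorem \ref{FLP2015} (the norm $\mu^T$ including $\||x|^{s/2}v\|_{L^5_xL^{10}_T}$). Your text instead locates the use of \eqref{temp} in the initial $H^{1/4}$ window, where it is in fact not needed. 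Two further points are minor but worth fixing: the energy identities must be performed on smooth approximating solutions $v_m$ (data regularized in $C_0^\infty$, continuous dependence), with the passage $\int(\partial_x v_m)^2\phi_N'\to\int(\partial_x v)^2\phi_N'$ justified by the Kato smoothing norm $\|\partial_x v\|_{L^\infty_xL^2_T}$ rather than by monotone convergence alone; and the finiteness of the weighted norm at the two times \eqref{twotimes} enters through the boundary terms $\int v_m^2(t_i)\phi_N$, not through a Gronwall propagation step, although the propagation of $\langle x\rangle^{\alpha}v(t)\in L^2(\R)$ for all $t$ (via the even extension of the weight) is indeed also needed for the interpolation.
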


Note that in Theorem \ref{teo0} when $\alpha\in(0,1/8]$ there is no gain of extra regularity. This is consistent with the weighted local theory in which the solution persists in $H^{1/4}(\R)\cap L^2(|x|^{2b}dx)$ whenever $b\le 1/8$. Another way to formulate Theorem \ref{teo0} is to replace \eqref{twotimes} with 
    \begin{equation*}
        |x|^{1/4+\alpha} v(t_0)\in L^2(\R) \mbox{ and } |x|^{1/4+\alpha} v(t_1) \in L^2(\R)
    \end{equation*}
    and the conclusion $v\in C\left( [0,T]; H^{2\alpha}(\R) \right)$ with $v\in C\left( [0,T]; H^{\frac{1}{4} + 2\alpha} (\R)\right)$ instead.
    
    \begin{remark}
    	We may interpret Theorem \ref{teo0}  as an ill-posedness result in the following sense: assume  $v_0$ belongs to  $Y_{b}:=\left(H^{1/4}(\R)\setminus H^{1/4^+}(\R)\right)\cap L^2(|x|^{2b}dx)$, for $b\in \R$ with $b> 1/8$, then the corresponding solution of the mKdV equation does not belong to $Y_{b}$.
    \end{remark}

In \cite{ILP2013} the authors suggested that the proof of Theorem \ref{ilp} extends to the mKdV. It appears that some adjustments to the bootstrap argument are required to rise according to the size of $\alpha$. For instance, if $\alpha>1/2$ the same choice of $\theta$ in \eqref{unito} might not be the best due to the constrain $\theta\le 1$. Moreover, regardless of the choice of $\theta$, the most regular scenario for $J^\theta$ leads to $H^1(\R)$.

This concern exhibits the needing of increase the regularity over $f$, that is, to get a higher estimate than $J^1f$; while the role of $\alpha$ is re-escalated to fit $(1-\theta)/2 + \tilde{\alpha}-1/2=0$ (for some $\tilde{\alpha}$) with $\theta\in(0,1)$. One natural way of doing this is as follows: in case $\alpha\in (\frac{r}{2},\frac{r+1}{2}]$ we can take $\partial_x^r$ to the mKdV equation and prove that $\partial_x^{r+1} u(t)\langle x \rangle^{\tilde{\alpha}-1/2}$ is in $L^2(\R)$, where $\tilde{\alpha}=\alpha-r/2\in(0,1/2]$.
By setting $f:=\langle x\rangle^{\tilde{\alpha}-1/2} u(t^*)$ it might be seen that $J^{r+1}f$ and $\langle x\rangle^{1/2}f$ are in $L^2(\R)$. Interpolating as in \eqref{unito} we would get
\begin{equation*}
     \left\|J^{(r+1)\theta}\left( \langle x\rangle^{(1-\theta)\frac{1}{2}}f \right) \right\|_2\le c\|J^{r+1}f\|_{2}^{\theta} \|\langle x\rangle^{1/2}f\|_2^{1-\theta}, \ \ \ \ \theta\in (0,1).
\end{equation*}
From the condition $(1-\theta)/2 + \tilde{\alpha}-1/2=0$ we obtain $\theta=2\alpha-r$, which is in $(0,1)$. Unfortunately this interpolation will not lead to the best estimate for $J^s u$ because $(r+1)\theta$ is less than or equal to the expected gain of regularity of $2\alpha$ and only attains it when $\alpha=(r+1)/2$. 

Given that only an increase in the regularity does not resolve on its own, in the proof of our main theorem we not only increase the spatial derivatives of $u$ but we also consider some decay for them. We manage to prove that in the general case $\alpha\in (\frac{r}{2},\frac{r+1}{2}]$, for almost every $t$ in a subinterval of $[t_0,t_1]$ we have
\begin{equation}
    \label{bootstrap1}
    \langle x\rangle^{\tilde{\alpha}-1/2}\partial_x^{r+1}u(t) \in L^2(\R) \hspace{5mm}\mbox{and}\hspace{5mm} \langle x\rangle^{\tilde{\alpha}}\partial_x^{r}u(t) \in L^2(\R),
\end{equation}
By considering now $f:=\langle x\rangle^{\tilde{\alpha}-1/2}\partial_x^{r}u(t^*)$ (for some $t^*\in[t_0,t_1]$) it is noted that $Jf$ and $\langle x\rangle^{1/2}f$ are in $L^2(\R)$. Therefore interpolating analogously to \eqref{unito} with $\theta=2\tilde{\alpha}$ we would obtain $J^{2\tilde{\alpha}}\partial_x^{r}u(t^*)\in L^2(\R)$, which is to say $u(t^*)\in H^{2\alpha}(\R)$.

This paper is organized as follows: in Section \ref{presection} we introduce some notation and give the preliminary theory in order to prove Theorem \ref{teo0}. In Section \ref{theorem1.1proof} we breakdown the proof and enunciate some consequences. 

\section{Preliminaries}\label{presection}
\subsection{Notation}\hfill

We use $c$ and $M$ to denote several constants that may vary from line to line. Dependence on parameters is indicated using subscripts. We write $a\sim b$ when there exists a constant $c>0$ such that $a/c\le b\le ca$. For a real number $s$ we denote with $s^+$ the quantity $s+\varepsilon$ for an arbitrary small $\varepsilon>0$.

The set $L^p(\R)$ is the usual Lebesgue space and $L^2(wdx)$ represent the space $L^2$ with respect to the measure $w(x)dx$. The norm in $L^p(\R)$ will be denoted by $\|\cdot\|_p$. For a function $f$, the expressions $\widehat{f}$ and $f^\vee$ mean the Fourier and inverse Fourier transform, respectively.  For $s, b\in \R$ the set $H^s(\R)$ is the $L^2$-based Sobolev space of index $s$ and $Z_{s,b}$ is the weighted Sobolev space $H^s(\R)\cap L^2(|x|^{2b}dx)$. 
We denote with $\|\cdot\|_{L^p_xL^q_T}$ the norm in the mixed Lebesgue spaces defined by 
\begin{equation*}
    \|f\|_{L^p_xL^q_T}:= \left\| \| f(\cdot,\cdot) \|_{L^q_T} \right\|_{L^p_x}.
\end{equation*}
We use $D^s_x$ for the classical fractional derivative $\widehat{D_x^sf}(\xi)=|\xi|^s \widehat{f}(\xi)$. For simplicity we adopt the notation $\langle x\rangle:=(1+x^2)^{1/2}$. Also, denote with $J^sf$ the potential $\widehat{J^s(f)}(\xi)=\langle \xi \rangle^{s}\widehat{f}(\xi).$
\subsection{Linear estimates}\hfill

We first introduce some results related to the group $U(t)$ defined in \eqref{semigrupo}.

\begin{lemma}
    For $u_0\in L^2(\R)$ we have
    \begin{equation}
        \label{ww1}
        \|D^{-1/4}_x U(t)u_0\|_{L^4_xL^\infty_T}\le c \|u_0\|_{L^2_x}
    \end{equation}
\end{lemma}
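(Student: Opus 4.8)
The plan is to prove the stronger global-in-time maximal function estimate $\left\|\,\sup_{t\in\R}|D_x^{-1/4}U(t)u_0|\,\right\|_{L^4_x}\lesssim \|u_0\|_2$, which contains \eqref{ww1} since $\|\cdot\|_{L^4_xL^\infty_T}$ is a supremum over a subset of times. By density I may assume $u_0$ is Schwartz with $\widehat{u_0}$ supported away from the origin, so that every integral below converges; as the final bound is uniform, the general case follows by a limiting argument.

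First I would linearize the supremum. Choosing (via a measurable selection, using the continuity of $t\mapsto U(t)u_0(x)$) a measurable function $t\colon\R\to\R$ that essentially realizes the supremum, it suffices to bound $\|Tu_0\|_4$ with a constant independent of the function $t(\cdot)$, where $Tu_0(x):=\int_\R e^{i(x\xi+t(x)\xi^3)}|\xi|^{-1/4}\widehat{u_0}(\xi)\,d\xi$. I would then run a $TT^*$ argument: since $\|T\|_{L^2\to L^4}^2=\|TT^*\|_{L^{4/3}\to L^4}$, the matter reduces to showing that the operator $TT^*$, whose kernel is $K(x,y)=\int_\R e^{i((x-y)\xi+(t(x)-t(y))\xi^3)}|\xi|^{-1/2}\,d\xi$, maps $L^{4/3}(\R)$ into $L^4(\R)$ with a uniform bound.

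The crux is the pointwise kernel estimate $|K(x,y)|\lesssim |x-y|^{-1/2}$, uniformly over all admissible phases, i.e. uniformly in the parameter $b:=t(x)-t(y)\in\R$. Writing $a:=x-y$ and rescaling $\xi=\eta/|a|$ reduces this to the scale-invariant one-dimensional oscillatory integral bound $\sup_{c\in\R}|I(c)|<\infty$, where $I(c):=\int_\R e^{i(\eta+c\eta^3)}|\eta|^{-1/2}\,d\eta$. This is where the real work lies: the phase $\phi(\eta)=\eta+c\eta^3$ has $\phi''(\eta)=6c\eta$ vanishing at the origin, precisely where the weight $|\eta|^{-1/2}$ is singular, so van der Corput's lemma cannot be applied in one shot. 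I would split dyadically in $|\eta|$ and separate the regimes $|c|\lesssim 1$ and $|c|\gtrsim 1$, using the nonstationary (first-derivative) version of van der Corput away from the stationary points $\phi'(\eta)=1+3c\eta^2=0$ and its second-derivative version near them, checking that the contributions of the weight $|\eta|^{-1/2}$ over the dyadic blocks sum to a constant independent of $c$. I expect this oscillatory integral analysis to be the main obstacle.

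Once the kernel bound is in hand the conclusion is immediate: $|TT^*g(x)|\lesssim \int_\R |x-y|^{-1/2}|g(y)|\,dy$ is the one-dimensional fractional integral of order $1/2$, so the Hardy--Littlewood--Sobolev inequality gives $\|TT^*g\|_4\lesssim\|g\|_{4/3}$ (the exponents match, as $1/4=3/4-1/2$). Hence $\|TT^*\|_{L^{4/3}\to L^4}\lesssim 1$, and by the $TT^*$ identity $\|T\|_{L^2\to L^4}\lesssim 1$ uniformly in $t(\cdot)$, which yields the maximal estimate and therefore \eqref{ww1}.
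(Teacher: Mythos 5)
Your proposal is correct, but note that the paper does not actually prove this lemma: its ``proof'' is a one-line citation of Theorem 3.7 in \cite{KPV}. What you have written is essentially a reconstruction of the classical Kenig--Ponce--Vega argument behind that citation: linearize the maximal function via measurable selection, run $TT^*$ (the identity $\|T\|_{L^2\to L^4}^2=\|TT^*\|_{L^{4/3}\to L^4}$ is the right reduction), prove the kernel bound $|K(x,y)|\lesssim|x-y|^{-1/2}$ uniformly in the time differences, and finish with Hardy--Littlewood--Sobolev, whose exponent bookkeeping $1/4=3/4-1/2$ you check correctly. The one step you leave as a plan rather than a proof --- the uniform bound $\sup_{c\in\R}|I(c)|<\infty$ for $I(c)=\int_\R e^{i(\pm\eta+c\eta^3)}|\eta|^{-1/2}\,d\eta$ --- is indeed the heart of the matter, and your dyadic/van der Corput scheme does close: near $\eta=0$ the weight is integrable, so that region is $O(1)$ trivially; away from the stationary points the first-derivative test sums over dyadic blocks; and at a stationary point $\eta_0=\pm(3|c|)^{-1/2}$ (which exists only for one sign of $c$ relative to the linear term, so after the rescaling $\xi=\eta/|x-y|$ you must indeed track both signs, as you note) the second-derivative test yields a contribution of size $|\phi''(\eta_0)|^{-1/2}|\eta_0|^{-1/2}\sim\bigl(|c|\cdot|c|^{-1/2}\bigr)^{-1/2}|c|^{1/4}\sim 1$, uniformly in $c$ --- precisely the cancellation that makes the $-1/4$ gain sharp for the Airy group. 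Two minor housekeeping points: the passage from Schwartz data with $\widehat{u}_0$ supported away from the origin to general $u_0\in L^2$ needs a Fatou-type limiting argument (routine), and the constant must be checked independent of the selection $t(\cdot)$, which your uniform-in-$b$ kernel bound provides. In sum, the paper buys brevity by outsourcing to \cite{KPV}; your route buys a self-contained proof at the cost of one delicate oscillatory-integral estimate, which you have correctly isolated and which goes through as you predict.
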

\begin{proof}
    See Theorem 3.7 in \cite{KPV}.
\end{proof}

\begin{theorem}\label{troca}
    Let ${b}\in(0,1)$. If $u_0\in Z_{2{b},{b}}$ then for all $t\in \mathbb{R}$ and for almost every $x\in \mathbb{R}$
    \begin{equation}
        \label{troca2}
        |x|^{b} U(t)u_0(x)=U(t)(|x|^{b} u_0)(x)+U(t)\{\Phi_{t,{b}}(\widehat{u}_0(\xi)) \}^\vee (x),
    \end{equation}
with 
\begin{equation}\label{residualestimate}
    \|U(t)\{\Phi_{t,{b}}(\widehat{u}_0(\xi)) \}^\vee  \|_2\le c(1+|t|)\|u_0\|_{H^{2{b}}}.
\end{equation}
Moreover, if in addition one has that for $\beta\in (0,{b})$,
\begin{equation*}
    D^\beta(|x|^{b} u_0)\in L^2(\R) \ \mbox{and} \ u_0\in H^{\beta+2{b}}(\R),
\end{equation*}
then for all $t\in \R$ and for almost every $x\in \R$,
\begin{equation*}
    D^\beta_x(|x|^{b} U(t)u_0)(x)=U(t)(D^\beta|x|^{b} u_0)(x) +U(t)(D^\beta (\left\{\Phi_{t,{b}}(\widehat{u}_0)(\xi) \right\}^\vee)(x),
\end{equation*}
with
\begin{equation}\label{residualderiv}
    \|D^\beta (\left\{\Phi_{t,{b}}(\widehat{u}_0)(\xi) \right\}^\vee) \|_2 \le c(1+|t|)\|u_0\|_{H^{\beta+2{b}}}.
\end{equation}
\end{theorem}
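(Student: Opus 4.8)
\emph{Proof proposal.} The plan is to push everything to the frequency side and reduce the identity \eqref{troca2} to a fractional Leibniz rule for the oscillatory symbol $e^{it\xi^3}$. The starting point is the classical fact that, for $b\in(0,1)$ and $f$ in a suitable dense class, multiplication by $|x|^b$ is intertwined by the Fourier transform with the homogeneous fractional derivative of order $b$ in the dual variable: up to dimensional constants one has
\begin{equation*}
    \widehat{|x|^b f}(\xi)=c_b\,D^b_\xi\widehat{f}(\xi),\qquad D^b_\xi g(\xi)=c_b'\int_{\R}\frac{g(\xi)-g(\eta)}{|\xi-\eta|^{1+b}}\,d\eta,
\end{equation*}
where the hypersingular integral is taken in the finite-part sense and $\|D^b_\xi g\|_2=\|\mathcal{D}^b_\xi g\|_2$ by Stein's identity, so that the weighted norm $\||x|^b f\|_2$ is comparable to $\|\mathcal{D}^b_\xi\widehat{f}\|_2$. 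Applying this with $f=U(t)u_0$, whose transform is $e^{it\xi^3}\widehat{u}_0(\xi)$, I would obtain $\widehat{|x|^bU(t)u_0}(\xi)=c_b\,D^b_\xi[e^{it\xi^3}\widehat{u}_0](\xi)$.

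The second step is to split the difference quotient by the elementary identity
\begin{equation*}
    e^{it\xi^3}\widehat{u}_0(\xi)-e^{it\eta^3}\widehat{u}_0(\eta)=e^{it\xi^3}\big(\widehat{u}_0(\xi)-\widehat{u}_0(\eta)\big)+\widehat{u}_0(\eta)\big(e^{it\xi^3}-e^{it\eta^3}\big).
\end{equation*}
Inserting this into the hypersingular integral, the first term reproduces $e^{it\xi^3}\,c_bD^b_\xi\widehat{u}_0(\xi)=e^{it\xi^3}\,\widehat{|x|^bu_0}(\xi)$, whose inverse transform is $U(t)(|x|^bu_0)$; the second term defines the residual,
\begin{equation*}
    \Phi_{t,b}(\widehat{u}_0)(\xi):=c_b'\int_{\R}\frac{\widehat{u}_0(\eta)\big(1-e^{it(\eta^3-\xi^3)}\big)}{|\xi-\eta|^{1+b}}\,d\eta,
\end{equation*}
so that, after inverting, \eqref{troca2} holds. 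These manipulations would first be carried out for $u_0$ with $\widehat{u}_0\in C^\infty_c$ and then extended by the a priori bound \eqref{residualestimate}.

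The estimate \eqref{residualestimate} is the heart of the matter and the step I expect to be the main obstacle. The key inputs are $|1-e^{i\theta}|\le\min(2,|\theta|)$ and $|\eta^3-\xi^3|=|\xi-\eta|\,|\xi^2+\xi\eta+\eta^2|$, and the essential device is to split the $\eta$-domain at the scale $\delta:=(1+|t|\langle\xi\rangle^2)^{-1}$. For $|\xi-\eta|\le\delta$ the linear bound $|1-e^{it(\eta^3-\xi^3)}|\lesssim|t|\langle\xi\rangle^2|\xi-\eta|$ removes one power of the singularity and integrates to $\lesssim|t|^b\langle\xi\rangle^{2b}$; for $\delta\le|\xi-\eta|\le1$ the bound $2$ gives $\int_\delta^1 r^{-1-b}\,dr\sim\delta^{-b}\lesssim(1+|t|)^b\langle\xi\rangle^{2b}$; and for $|\xi-\eta|\ge1$ the kernel $|\xi-\eta|^{-1-b}$ is integrable and contributes only $\|\widehat{u}_0\|_2$. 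On the near-diagonal region $\langle\xi\rangle\sim\langle\eta\rangle$, so factoring the weight as $\langle\eta\rangle^{2b}$ and applying the Schur test to the remaining symmetric kernel yields $\|\Phi_{t,b}(\widehat{u}_0)\|_2\lesssim(1+|t|)\|\langle\eta\rangle^{2b}\widehat{u}_0\|_2=c(1+|t|)\|u_0\|_{H^{2b}}$. The delicate point is exactly the balancing of the two regimes at the scale $\delta$: using the linear bound everywhere would cost $\langle\xi\rangle^2$ and only recover the integer relation, whereas interpolating through $\min(2,|\theta|)$ is what pins the loss to the sharp value $2b$ with merely linear growth in $t$.

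Finally, for the ``moreover'' part I would multiply the frequency identity by $|\xi|^\beta$ before inverting, which turns the main term into $\widehat{U(t)(D^\beta(|x|^bu_0))}$ and the residual into $\widehat{U(t)(D^\beta\{\Phi_{t,b}(\widehat{u}_0)\}^\vee)}$. Repeating the kernel analysis with the extra factor $|\xi|^\beta$, and using $|\xi|^\beta\lesssim\langle\eta\rangle^\beta$ on the near-diagonal region, the hypothesis $\beta\in(0,b)$ is precisely what keeps the resulting kernels in the admissible range for the Schur-test argument, so that the same two-regime splitting delivers \eqref{residualderiv} with $\|u_0\|_{H^{\beta+2b}}$ in place of $\|u_0\|_{H^{2b}}$.
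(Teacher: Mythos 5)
Your proposal is correct and is essentially the argument behind Theorem 1 of \cite{FLP}, which is all the paper itself offers for this statement (it gives no internal proof, only the citation): the same frequency-side reduction $\widehat{|x|^b f}=c_b D^b_\xi\widehat{f}$ with the hypersingular representation, the same elementary splitting of the difference quotient isolating the commutator $\Phi_{t,b}$, and the same two-regime bound via $|1-e^{i\theta}|\le\min(2,|\theta|)$ at the scale $(1+|t|\langle\xi\rangle^2)^{-1}$, yielding even the slightly better growth $(1+|t|)^b$ in \eqref{residualestimate}. One small precision: the hypothesis $\beta\in(0,b)$ is needed in the far region $|\xi-\eta|\ge1$, where $|\xi|^\beta\lesssim|\eta|^\beta+|\xi-\eta|^\beta$ and integrability of $|\xi-\eta|^{\beta-1-b}$ forces $\beta<b$, rather than on the near-diagonal region where $\langle\xi\rangle\sim\langle\eta\rangle$ already suffices.
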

\begin{proof}
    See Theorem 1 in \cite{FLP}.
\end{proof}

Next, we state an interpolation lemma relating regularity and decay that was first introduced in Lemma 4 of \cite{NAHASPONCE}. 

\begin{lemma}\label{lemma}
    Let $a, b>0$. Assume that $J^{a} f$ and $\langle x\rangle^{b} f$ are in $L^{2}(\mathbb{R}) .$ Then for any $\theta \in(0,1)$,
$$
\left\|J^{\theta a}\left(\langle x\rangle^{(1-\theta) b} f\right)\right\|_{2} \leq c\left\|\langle x\rangle^{b} f\right\|_{2}^{1-\theta}\left\|J^{a} f\right\|_{2}^{\theta}.
$$
\end{lemma}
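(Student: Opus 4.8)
The natural approach is complex interpolation via the three-lines theorem (Stein's interpolation for an analytic family), so the plan is to realize $J^{\theta a}(\langle x\rangle^{(1-\theta)b}f)$ as the value at $z=\theta$ of an analytic family whose two boundary behaviours are governed by $\|\langle x\rangle^b f\|_2$ and $\|J^a f\|_2$, respectively. First I would reduce to $f$ and a dual function $g$ Schwartz, using density together with the duality $\|J^{\theta a}(\langle x\rangle^{(1-\theta)b}f)\|_2=\sup_{\|g\|_2\le 1}|\langle J^{\theta a}(\langle x\rangle^{(1-\theta)b}f),g\rangle|$. Then, on the strip $\{z:0\le \mathrm{Re}\,z\le 1\}$, I would define
\[
F(z):=\langle J^{za}\big(\langle x\rangle^{(1-z)b}f\big),\,g\rangle,
\]
which is analytic in the interior and continuous up to the boundary with at most subexponential growth in $\mathrm{Im}\,z$, and which satisfies $F(\theta)=\langle J^{\theta a}(\langle x\rangle^{(1-\theta)b}f),g\rangle$, exactly the pairing to be estimated.

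The two boundary estimates are the heart of the matter. On $\mathrm{Re}\,z=0$, writing $z=i\tau$ and $\langle x\rangle^{(1-i\tau)b}=\langle x\rangle^{b}\langle x\rangle^{-i\tau b}$, the multiplier $J^{i\tau a}$ is an $L^2$-isometry and $\langle x\rangle^{-i\tau b}$ has modulus one, so $|F(i\tau)|\le \|\langle x\rangle^{b}f\|_2\,\|g\|_2$, uniformly in $\tau$. On $\mathrm{Re}\,z=1$, writing $z=1+i\tau$, one gets $|F(1+i\tau)|\le \|J^{a}(\langle x\rangle^{-i\tau b}f)\|_2\,\|g\|_2$; here $J^{a}$ and the oscillatory weight $\langle x\rangle^{-i\tau b}$ do \emph{not} commute, and this is where the real work lies. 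I would control the commutator by a fractional Leibniz (Kato--Ponce) estimate,
\[
\|J^{a}(\langle x\rangle^{-i\tau b}f)\|_2\le \|\langle x\rangle^{-i\tau b}J^{a}f\|_2+\|[J^{a},\langle x\rangle^{-i\tau b}]f\|_2,
\]
observing that since each derivative falling on $\langle x\rangle^{-i\tau b}$ produces a factor $O(|\tau|)$ together with a decaying factor $\langle x\rangle^{-1}$, the commutator maps $H^{a-1}$ into $L^2$ with norm $O((1+|\tau|)^{N})$. Hence $|F(1+i\tau)|\le c(1+|\tau|)^{N}\|J^{a}f\|_2\,\|g\|_2$.

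Finally I would feed these bounds into the quantitative form of the three-lines theorem (the Hirschman--Stein--Weiss formula), in which $\log|F(\theta)|$ is dominated by the integrals of $\log|F(i\tau)|$ and $\log|F(1+i\tau)|$ against the harmonic measures $\omega_0(\theta,\cdot)$ and $\omega_1(\theta,\cdot)$ of the strip. Since these measures decay like $e^{-\pi|\tau|}$ and carry total masses $1-\theta$ and $\theta$, the polynomial factor contributes only $\int \omega_1(\theta,\tau)\log(1+|\tau|)^{N}\,d\tau$, a finite quantity bounded uniformly for $\theta\in(0,1)$, while the remaining terms assemble into $\|\langle x\rangle^{b}f\|_2^{1-\theta}\|J^{a}f\|_2^{\theta}$. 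This gives the claimed inequality with a constant independent of $\theta$.

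The main obstacle is exactly the non-commutativity on the line $\mathrm{Re}\,z=1$: a naive bounded three-lines theorem fails because the boundary bound there grows in $\tau$, and one cannot absorb this growth by the usual trick of multiplying $F$ by $e^{\varepsilon z^2}$ (the resulting constant blows up as $\varepsilon\to 0$). The key point that makes the argument work is that the growth is only polynomial and is therefore absorbed logarithmically by the exponentially decaying harmonic measure of the strip, so that the delicate step is not the interpolation machinery itself but securing the commutator estimate with an explicit, at most polynomial, dependence on $\tau$.
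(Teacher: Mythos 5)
Your proposal is correct and follows essentially the same route as the paper's source: the paper itself gives no proof but cites Lemma 4 of Nahas--Ponce \cite{NAHASPONCE}, whose argument is exactly this Stein--Hirschman analytic-family interpolation for $F(z)=\langle J^{za}(\langle x\rangle^{(1-z)b}f),g\rangle$, with the trivial bound on $\mathrm{Re}\,z=0$ and a boundary bound on $\mathrm{Re}\,z=1$ of the form $\|J^{a}(\langle x\rangle^{-i\tau b}f)\|_2\le c(1+|\tau|)^{N}\|J^{a}f\|_2$ (there obtained from the pointwise bounds $|\partial_x^k\langle x\rangle^{-i\tau b}|\le c_k(1+|\tau|)^k\langle x\rangle^{-k}$, absorbed by the admissible-growth version of the three-lines theorem). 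Your identification of the non-commutativity on $\mathrm{Re}\,z=1$ as the only real difficulty, and its resolution via a polynomially growing multiplier bound on $H^{a}$, matches the cited proof; the one cosmetic caveat is that the literal Kato--Ponce commutator estimate involves a term like $\|J^{a}F\|_{2}\|f\|_\infty$ which is infinite for the non-decaying weight $\langle x\rangle^{-i\tau b}$, so one should instead get the $H^{a}$ multiplication bound directly (e.g.\ by interpolating multiplication between $L^2$ and $H^{\lceil a\rceil}$), exactly as the derivative bounds you state permit.
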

In the next Lemma, the class $A_p$ denotes the Muckenhoupt class on $\R$, that is, the set of all the weights $w$ such that 
\begin{equation*}
    [w]_p=\sup\limits_I\left(\frac{1}{|I|}\int_I w(y)dy \right)\left(\frac{1}{|I|}\int_I w(y)^{-\frac{1}{p-1}}dy \right)^{p-1}<\infty,
\end{equation*}
where the supremum is taken over all intervals $I\subset \R$ (see \cite{cruz} and \cite{hunt} for details).
\begin{lemma}
    \label{HA2}
    The Hilbert transform is bounded in $L^p(wdx)$, $1<p<\infty$, if and only if $w\in A_p$. 
\end{lemma}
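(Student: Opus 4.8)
The statement is the Hunt--Muckenhoupt--Wheeden theorem, and the plan is to prove the two implications separately; the necessity of the $A_p$ condition is elementary while the sufficiency is the substantial part. For \textbf{necessity}, suppose the Hilbert transform $H$ is bounded on $L^p(w\,dx)$ and test this against functions supported on a single interval. Fix an interval $I$ and let $I^*$ be the adjacent interval of the same length immediately to its right. For nonnegative $f$ supported on $I$ and any $x\in I^*$ one has $0<x-y<2|I|$ for $y\in I$, so the kernel representation gives the pointwise bound $Hf(x)\ge c|I|^{-1}\int_I f$. Raising to the $p$-th power, integrating $w$ over $I^*$, and using boundedness yields
\begin{equation*}
  \Big(\frac{1}{|I|}\int_I f\Big)^p w(I^*)\le C\int_I f^p\,w\,dx.
\end{equation*}
Choosing $f=w^{-1/(p-1)}\chi_I$ (truncated first, then passing to the limit) collapses both sides to powers of $\sigma:=\int_I w^{-1/(p-1)}$ and produces $|I|^{-p}\,w(I^*)\,\sigma^{p-1}\le C$. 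A symmetric argument on the left-adjacent interval, together with a comparison of $w(I)$ and $w(I^*)$ by summation over adjacent pieces, upgrades this to $[w]_p<\infty$.

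For \textbf{sufficiency}, assume $w\in A_p$; here the plan is to control $H$ by the Hardy--Littlewood maximal function $\M$ following the Coifman--Fefferman philosophy, in three interlocked stages. First, I would establish the reverse H\"older inequality for $A_p$ weights; this is the engine that yields the self-improving (``openness'') property $w\in A_p\Rightarrow w\in A_{p-\e}$ for some $\e>0$, and in particular $w\in A_\infty$. Second, I would invoke Muckenhoupt's maximal theorem, that $\M$ is bounded on $L^p(w\,dx)$ exactly when $w\in A_p$, whose proof combines the reverse H\"older inequality with a Calder\'on--Zygmund decomposition adapted to the weight. Third, I would prove the good-$\lambda$ inequality comparing $H$ and $\M$: for $w\in A_\infty$ there are $\delta>0$ and, for each small $\gamma>0$, a constant so that
\begin{equation*}
  w\big(\{|Hf|>2\lambda,\ \M f\le\gamma\lambda\}\big)\le C\gamma^{\delta}\,w\big(\{|Hf|>\lambda\}\big).
\end{equation*}

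Integrating this against $p\lambda^{p-1}\,d\lambda$ and absorbing the left-hand contribution (legitimate since $\gamma$ may be chosen small) gives $\|Hf\|_{L^p(w)}\le C\|\M f\|_{L^p(w)}$, and the second stage bounds the right-hand side by $\|f\|_{L^p(w)}$. I expect the good-$\lambda$ inequality to be the main obstacle: its proof localizes on the Calder\'on--Zygmund cubes of the level set $\{|Hf|>\lambda\}$ and uses the weak-type $(1,1)$ bound together with the local $L^1$ smoothing of $H$ to show that on each such cube the bad set where $Hf$ is large but $\M f$ is small has small relative Lebesgue measure; the final step converts this Lebesgue smallness into $w$-smallness, which is precisely what the reverse H\"older inequality of the first stage provides. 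Thus the three stages are tightly coupled, and the genuine difficulty lies in the harmonic-analytic machinery supporting the $A_\infty$ / good-$\lambda$ mechanism rather than in the final integration.
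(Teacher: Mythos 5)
Your proposal is sound, but note that the paper does not prove this lemma at all: it simply cites Theorem 9 of Hunt--Muckenhoupt--Wheeden \cite{hunt}, so the relevant comparison is with that original 1973 argument. Your architecture --- necessity by testing on adjacent intervals, sufficiency via the reverse H\"older inequality and openness of $A_p$, Muckenhoupt's weighted maximal theorem, and a good-$\lambda$ inequality comparing $H$ with the maximal function $\mathcal{M}$ --- is the later Coifman--Fefferman route, which postdates \cite{hunt} and has the advantage of extending verbatim to general Calder\'on--Zygmund operators, whereas the original proof is specific to the Hilbert transform/conjugate function and does not use the good-$\lambda$ device. Two points in your sketch need tightening. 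First, in the necessity step, passing from $|I|^{-p}\,w(I^*)\,\sigma(I)^{p-1}\le C$ to the $A_p$ condition on $I$ itself is done not by ``summation over adjacent pieces'' but by H\"older's inequality, which gives $|I|^p\le w(I)\,\sigma(I)^{p-1}$ for every interval; combined with the two crossed estimates this yields $w(I^*)\le C\,w(I)$ and its symmetric counterpart, from which $[w]_p<\infty$ follows. Second, the absorption after integrating the good-$\lambda$ inequality against $p\lambda^{p-1}\,d\lambda$ requires the a priori finiteness of $\|Hf\|_{L^p(w)}$ (or a truncation of the distribution function at a finite level $N$, letting $N\to\infty$ at the end); smallness of $\gamma$ alone does not license subtracting a possibly infinite quantity, and this is standardly secured by first taking $f$ bounded with compact support. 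With these repairs your outline is a correct and complete proof plan, though each of its three stages (reverse H\"older, the weighted maximal theorem, the good-$\lambda$ inequality) is itself a substantial theorem that the paper, reasonably, outsources to the literature rather than reproving.
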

\begin{proof}
    See Theorem 9 in \cite{hunt}.
\end{proof}

\subsection{Local theory results}\hfill

We gather some useful results regarding the existence of solutions to \eqref{ivp} in Sobolev and weighted Sobolev spaces. 

\begin{theorem}[Local theory]\label{localtheorymkdv}
Let $s\ge 1/4$. Then for any $v_0\in H^{s}(\R)$ there exists $T=T(\|D^{1/4}_xv_0\|_{L^2})=c(\|D^{1/4}_xv_0\|_{L^2}^{-4})$ and a unique solution $v(t)$ of the initial value problem
\begin{equation}\label{mKdV}
    \begin{cases}
        \partial_t v +\partial_x^3 v +v^2\partial_x v =0,\ \ x,t\in \R. \\ v(x,0)=v_0(x)
    \end{cases}
\end{equation}
such that \begin{equation}\label{class1uno}
    v\in C\left([0,T];H^s(\R) \right),
\end{equation}
\begin{equation}\label{class2dos}
    \|D^s_x\partial_xv\|_{L^\infty_xL^2_T}+\|D_x^{s-1/4}\partial_xv\|_{L^{20}_xL^{5/2}_T}+\|D^s_xv\|_{L^5_xL^{10}_T}+\|v\|_{L^4_xL^\infty_T}<\infty
\end{equation}
and \begin{equation}\label{class3tres}
     \|\partial_x v\|_{L^\infty_xL^2_T}<\infty. 
\end{equation}
Moreover, for any $T'\in (0,T)$,  there exists a neighborhood $\mathcal{V}$ of $v_0$ in $H^s(\R)$ such that the map $\Tilde{v}_0\mapsto\tilde{v}(t)$ from $\mathcal{V}$ into the class defined by \eqref{class1uno}-\eqref{class3tres}, with $T'$ instead of $T$, is smooth. 
\end{theorem}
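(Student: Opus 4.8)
The plan is to solve the Duhamel (integral) equation
\begin{equation*}
v(t) = U(t)v_0 - \frac{1}{3}\int_0^t U(t-t')\,\partial_x\big(v^3\big)(t')\,dt'
\end{equation*}
by the contraction mapping principle, taking advantage of the fact that the cubic nonlinearity is a perfect derivative, $v^2\partial_x v = \tfrac13\partial_x(v^3)$. First I would fix $s\ge 1/4$ and introduce a complete metric space consisting of those $v\in C([0,T];H^s(\R))$ for which
\begin{equation*}
\Lambda_T(v) := \|v\|_{L^\infty_T H^s_x} + \|D^s_x\partial_x v\|_{L^\infty_x L^2_T} + \|D^{s-1/4}_x\partial_x v\|_{L^{20}_x L^{5/2}_T} + \|D^s_x v\|_{L^5_x L^{10}_T} + \|v\|_{L^4_x L^\infty_T} + \|\partial_x v\|_{L^\infty_x L^2_T}
\end{equation*}
is at most some radius $a$, endowed with a metric built from a lower-order subset of these norms (this weaker metric is what yields uniqueness and continuity). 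I then aim to show that $\Phi(v)(t) := U(t)v_0 - \tfrac13\int_0^t U(t-t')\partial_x(v^3)\,dt'$ maps a ball of radius $a$ into itself and is a contraction there, once $T$ and $a$ are chosen appropriately in terms of $\|D^{1/4}_x v_0\|_2$.

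The linear contributions are governed by the dispersive estimates for the Airy group. The maximal-function bound \eqref{ww1} controls $\|U(t)v_0\|_{L^4_x L^\infty_T}$ after writing $U(t)v_0 = D^{-1/4}_x U(t)\,D^{1/4}_x v_0$, while the Kato–Kenig–Ponce–Vega smoothing estimate
\begin{equation*}
\|D^s_x\partial_x U(t)v_0\|_{L^\infty_x L^2_T} \leq c\,\|D^s_x v_0\|_2
\end{equation*}
together with the Strichartz-type estimate $\|D^s_x U(t)v_0\|_{L^5_x L^{10}_T}\le c\|D^s_x v_0\|_2$ and its $L^{20}_x L^{5/2}_T$ counterpart handle the remaining pieces of $\Lambda_T$; all of these, and their inhomogeneous (Duhamel) forms such as $\|\partial_x\int_0^t U(t-t')g\,dt'\|_{L^\infty_x L^2_T}\le c\|g\|_{L^1_x L^2_T}$, are precisely the estimates established in \cite{KPV}. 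Consequently the linear term $U(t)v_0$ contributes $c\|v_0\|_{H^s}$ to $\Lambda_T(\Phi(v))$.

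The heart of the argument is the nonlinear estimate. After using the smoothing estimate to move the derivative from the Duhamel integral onto the group, the task reduces to bounding $\|D^s_x(v^3)\|_{L^1_x L^2_T}$ (and the analogous quantities pairing with the other norms in $\Lambda_T$) by a power of $\Lambda_T(v)$. The key tool is the fractional Leibniz rule, which distributes $D^s_x$ across the product $v\cdot v\cdot v$; one then applies Hölder's inequality in the mixed spaces, placing the factor that carries the derivative in a smoothing-type norm and the remaining two factors in the maximal-function norm $\|v\|_{L^4_x L^\infty_T}$. Since the problem is \emph{critical} at $s=1/4$, the power of $T$ gained from Hölder in the time variable is exactly what forces $T$ to depend only on $\|D^{1/4}_x v_0\|_2$, reproducing the stated scaling $T = c\|D^{1/4}_x v_0\|_2^{-4}$. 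I expect this bookkeeping — keeping every derivative and every Hölder exponent consistent so that the estimate closes at the endpoint $s=1/4$ rather than merely for $s>1/4$ — to be the main obstacle.

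Once the contraction is in place, the fixed point $v$ automatically belongs to the class \eqref{class1uno}–\eqref{class3tres}. Uniqueness follows from the contraction estimate in the weaker metric (or, equivalently, from a Gronwall argument applied to the difference of two solutions measured in the lowest-order norm). Finally, the smooth dependence of the solution on the data over any $[0,T']$ with $T'<T$ follows by applying the implicit function theorem to the smooth map $(v_0,v)\mapsto v - \Phi(v)$, whose derivative in $v$ is invertible near the solution as a direct consequence of the same contraction estimates.
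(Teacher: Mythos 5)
Your proposal is correct and follows essentially the same route the paper takes: the paper does not prove Theorem \ref{localtheorymkdv} itself but cites \cite[Theorem 2.4]{KPV} and sketches precisely your argument, namely a contraction in the space $X^T_a$ of \eqref{xta} with the norm $\Lambda^T$, built from the Kato smoothing, maximal-function \eqref{ww1} and Strichartz-type estimates for the Airy group, with the fractional Leibniz rule and mixed-norm H\"older yielding the nonlinear bound \eqref{localcomp1}, $\int_0^T\|J^s(v^2\partial_x v)\|_{L^2_x}\,dt'\le cT^{1/2}\Lambda^T(v)^3$, and the choice $a=2c\|v_0\|_{H^{1/4}}$, $2ca^2T^{1/2}<1$. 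Your identification of the endpoint bookkeeping at $s=1/4$ (so that $T$ depends only on $\|D_x^{1/4}v_0\|_{L^2}$) as the delicate point, and your treatment of uniqueness and smooth dependence via the contraction estimates, agree with the cited proof, so nothing further is needed.
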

We briefly mention a few aspects related to the proof of Theorem \ref{localtheorymkdv}; for the details we refer the reader to  \cite[Theorem 2.4]{KPV}. The authors used the Banach fixed point theorem in the space
\begin{equation}\label{xta}
    \begin{split}
        X^T_a:=\left\{v\in C([0,T];H^s(\R)) \mid \Lambda^T(v)\le a \right\},
    \end{split}
    \end{equation}
    where 
    \begin{equation*}
        \begin{split}
            \Lambda^T(v)&=\|v\|_{L^\infty_T H^s}+\|D^s \partial_x v\|_{L^\infty_xL^2_T}+\|D^{s-1/4}_x \partial_x v\|_{L^{20}_xL^{5/2}_T}\\& \hspace{5mm}+\|D^s_x v\|_{L^5_xL^{10}_T}+\|v\|_{L^4_xL^\infty_T}+ \|\partial_x v\|_{L^\infty_xL^2_T}.
        \end{split}
    \end{equation*}
It was shown that, for $a=2c\|v_0\|_{H^{1/4}}$ and $T$ such that $2ca^2T^{1/2}<1$, the operator
\begin{equation*}
    \Psi_{v_0}(v)=U(t)v_0-\int_0^tU(t-t')(v^2\partial_x v)(t')dt'
\end{equation*}
has a unique fixed point in $X^T_a$. In particular, it was proved that \begin{equation}
    \label{localcomp1} 
   \int_0^T \|J^s(v^2\partial_x v)\|_{L^2_x}dt' \le  cT^{1/2}\Lambda^T(v)^3,
\end{equation}
which is the estimate for the nonlinear term.

Concerning global well-posedness, we have following result.

\begin{theorem}
    \label{global} The IVP \eqref{ivp} with $k=2$ is globally well-posed for initial data $u_0\in H^s(\R)$, $s\ge 1/4$.
\end{theorem}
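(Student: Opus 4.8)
The plan is to upgrade the local theory of Theorem \ref{localtheorymkdv} to a global one by combining it with a priori bounds coming from the conservation laws of \eqref{mKdV}. The decisive structural feature is that the local existence time $T$ depends only on $\|D_x^{1/4}v_0\|_{L^2}$; hence, if one can bound a quantity controlling the $\dot H^{1/4}$-norm uniformly in $t$, the local solution can be reapplied on consecutive intervals of uniform length and thereby extended to all of $\R$.

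First I would treat the range $s\ge 1$, where the argument is self-contained. Multiplying \eqref{mKdV} by $v$ and integrating by parts yields conservation of the mass $\|v(t)\|_2^2$; multiplying instead by $\partial_x^2 v+\tfrac13 v^3$ and integrating by parts yields conservation of the energy
\begin{equation*}
    E(v(t)):=\|\partial_x v(t)\|_2^2-\tfrac16\|v(t)\|_4^4 ,
\end{equation*}
since $\partial_t v=-\partial_x\!\left(\partial_x^2 v+\tfrac13 v^3\right)$ makes the integrand a perfect derivative. Combining $\|\partial_x v\|_2^2=E(v)+\tfrac16\|v\|_4^4$ with the one-dimensional Gagliardo--Nirenberg inequality $\|v\|_4^4\le c\,\|v\|_2^{3}\|\partial_x v\|_2$ and Young's inequality to absorb the top-order factor (legitimate here because the power of $\|\partial_x v\|_2$ is $1<2$, mKdV being $\dot H^{-1/2}$-critical rather than $L^2$-critical), I obtain
\begin{equation*}
    \|\partial_x v(t)\|_2^2\le 2|E(v_0)|+c\,\|v_0\|_2^{6},
\end{equation*}
uniformly in $t$ and with no smallness assumption. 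Together with mass conservation this gives a uniform $H^1$ bound. Since $\|D_x^{1/4}v\|_2\le c\,\|v\|_2^{3/4}\|\partial_x v\|_2^{1/4}$ by interpolation, the local time stays bounded below along the evolution, so iterating Theorem \ref{localtheorymkdv} produces a global $H^1$ solution; the persistence of higher regularity built into that theorem then extends the conclusion to every $s\ge 1$.

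The main obstacle is the range $1/4\le s<1$, where no conserved quantity lives at the $H^s$-level and the energy above is controlled precisely by the $H^1$-norm one is lacking. The standard remedy is the I-method: one introduces a Fourier multiplier $I_N$ acting as the identity on frequencies $\lesssim N$ and smoothing higher frequencies, so that $I_N$ maps $H^s$ into $H^1$, and one shows the modified energy $E(I_N v)$ is \emph{almost} conserved, with increment over each local interval controlled by a negative power of $N$. Rescaling and letting $N\to\infty$ transfers the a priori $H^1$ bound down to the regularity $s\ge 1/4$ and closes the continuation argument. This is exactly the content of the global well-posedness theory of \cite{coliander} (see also \cite{critical}, \cite{kishimoto}, and the sharp threshold in \cite{visan}), to which I would appeal for the sub-$H^1$ range rather than reproduce the delicate multilinear estimates it requires.
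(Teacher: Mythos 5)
Your proposal is correct, but it does more than the paper, whose entire ``proof'' of this theorem is a citation: Theorem 3 of \cite{coliander} (the I-method result, giving global well-posedness for $s>1/4$) together with Theorem 1.2 of \cite{critical} (which supplies the endpoint $s=1/4$). For the range $1/4\le s<1$ you ultimately defer to the same literature, so there you coincide with the paper; the genuinely new content in your write-up is the self-contained treatment of $s\ge 1$, and that part is sound. The conserved energy $\|\partial_x v\|_2^2-\tfrac16\|v\|_4^4$ is the right one for the normalization $v^2\partial_x v$ of \eqref{mKdV}, the Gagliardo--Nirenberg bound $\|v\|_4^4\le c\|v\|_2^3\|\partial_x v\|_2$ is linear in the top-order factor so Young absorbs it with no smallness or sign condition, and since Theorem \ref{localtheorymkdv} gives a lifespan $T=c\|D_x^{1/4}v_0\|_{L^2}^{-4}$ at \emph{every} $s\ge 1/4$ simultaneously, the uniform $H^1$ bound plus the interpolation $\|D_x^{1/4}v\|_2\le c\|v\|_2^{3/4}\|\partial_x v\|_2^{1/4}$ does yield global extension with persistence of $H^s$ regularity for all $s\ge1$. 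One small but real correction: Theorem 3 of \cite{coliander} covers only $s>1/4$, so your citation of it as ``exactly the content'' needed for $1/4\le s<1$ leaves the endpoint $s=1/4$ uncovered; the references you relegate to a parenthetical ``see also'' (\cite{critical}, \cite{kishimoto}) are in fact load-bearing there, which is precisely why the paper cites \cite{critical} alongside \cite{coliander}. Promote one of them to the primary citation at $s=1/4$ and your argument covers the full range claimed.
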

\begin{proof}
    See Theorem 3 in \cite{coliander} and Theorem 1.2 in \cite{critical}.
\end{proof}

Since we are interested in weighted Sobolev space, we may combine Theorem \ref{localtheorymkdv} with weighted estimates to obtain

\begin{theorem}[Local  theory in weighted spaces]\label{FLP2015}
Let $v\in C([0,T];H^{s}(\R))$ denote the solution of the IVP \eqref{mKdV} provided by Theorem \ref{localtheorymkdv}. If $v_0\in Z_{s,r}$ with $s\ge 2r$ then the solution satisfies \begin{equation}
    \label{class4}
    v\in C([0,T]; Z_{s,r}) \hspace{3mm} \mbox{and}\hspace{3mm} \||x|^{s/2}v\|_{L^5_xL^{10}_T}<\infty.
\end{equation}
For any $T'\in (0,T)$ there exists a neighborhood $\mathcal{V}$ of $v_0$ in $Z_{s,r}$ such that the map $\tilde{v}_0\mapsto\tilde{v}(t)$ from $\mathcal{V}$ into the class defined by \eqref{class1uno}-\eqref{class3tres} and \eqref{class4} with $T'$ instead of $T$ is smooth. 
\end{theorem}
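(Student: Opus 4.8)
The plan is to upgrade the contraction argument behind Theorem \ref{localtheorymkdv} so that it takes place in a space carrying the extra weighted information, rather than to re-solve the equation from scratch. Concretely, I would work in the complete metric space obtained by intersecting $X^T_a$ with the sets on which $\||x|^r v\|_{L^\infty_T L^2_x}$ and $\||x|^{s/2} v\|_{L^5_x L^{10}_T}$ are finite, and show that the operator $\Psi_{v_0}$ is a contraction there for $T$ small. Since the unweighted part is already handled by Theorem \ref{localtheorymkdv}, the only new content is to bound the two weighted norms of $\Psi_{v_0}(v)$ by the data norm $\|v_0\|_{Z_{s,r}}$ plus a small multiple of the corresponding norms of $v$; then uniqueness, persistence, and the smoothness of the data-to-solution map all follow from the fixed-point structure exactly as in the unweighted setting. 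To make the weight manipulations rigorous one first runs the argument for Schwartz data and passes to the limit using the uniform a priori bounds.

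For the linear part the weight is moved past the group via Theorem \ref{troca}. Writing $|x|^r U(t) v_0 = U(t)(|x|^r v_0) + U(t)\{\Phi_{t,r}\}^\vee$ and using that $U(t)$ is unitary on $L^2$, the residual bound \eqref{residualestimate} yields $\||x|^r U(t) v_0\|_2 \leq \||x|^r v_0\|_2 + c(1+T)\|v_0\|_{H^{2r}}$, which is finite precisely because $s \geq 2r$ forces $v_0\in H^s \hookrightarrow H^{2r}$. The space-time norm $\||x|^{s/2} U(t) v_0\|_{L^5_x L^{10}_T}$ is more delicate: the weight $s/2$ cannot come from the data, since we only assume decay of order $r\leq s/2$, so it must be generated by the dispersion. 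Iterating the pointwise identity \eqref{pointwisetroca} (equivalently, applying the fractional version \eqref{temp} to the non-integer part of the weight) trades $|x|^{s/2}$ for up to $s$ spatial derivatives together with lower-order weighted terms; the top term is then absorbed by the linear Strichartz estimate underlying \eqref{class2dos}, namely $\|D^s_x U(t) v_0\|_{L^5_x L^{10}_T} \lesssim \|v_0\|_{H^s}$, while the intermediate pieces, carrying a weight $s/2-j$ against $2j$ derivatives, are balanced using $s\geq 2r$ together with the interpolation Lemma \ref{lemma}.

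For the Duhamel term the same exchange is applied inside the time integral: writing $|x|^r U(t-t')F = U(t-t')(|x|^r F)+\text{(residual)}$ with $F = v^2\partial_x v$ reduces matters to controlling $\||x|^r F\|_2$ and the weighted space-time norm of $F$, the residual being handled by \eqref{residualderiv} once $F\in Z_{2r,r}$, which in turn follows from the $H^{2r}\subset H^s$ nonlinear estimate \eqref{localcomp1}. Expanding $|x|^r(v^2\partial_x v)$ by Leibniz and distributing the weight across the three factors, each resulting piece is estimated by combining the algebra bounds behind \eqref{localcomp1} with the maximal and smoothing norms $\|v\|_{L^4_x L^\infty_T}$ and $\|D^s_x\partial_x v\|_{L^\infty_x L^2_T}$ from \eqref{class2dos} and the already-tracked weighted norms of $v$, the decisive gain being the factor $T^{1/2}$ that turns the nonlinear contribution into a strict contraction for small $T$. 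I expect the \emph{main obstacle} to be exactly this weighted nonlinear estimate: after concentrating all of the weight $r$ (or $s/2$) on one factor one must still measure that factor in a norm for which a bound is available, while the remaining factors absorb the derivative $\partial_x$ without exceeding the regularity $s$. This is where the hypothesis $s\geq 2r$ is used decisively, since it is precisely the threshold that lets every weighted-and-differentiated term be dominated, via Lemma \ref{lemma}, by a product of a pure-regularity norm and a pure-decay norm that the contraction already controls.
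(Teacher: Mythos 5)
Your overall strategy is the same as the paper's: the paper proves this theorem by re-running the contraction of Theorem \ref{localtheorymkdv} with the augmented norm $\mu^T$ of \eqref{class4p}, commuting weights past the group via Theorem \ref{troca}, and referring to Theorem 1 of \cite{NAHASPONCE2} for the detailed estimates. Your treatment of the $L^\infty_T L^2_x$ weighted piece of the linear term via \eqref{troca2} and \eqref{residualestimate} (using $s\ge 2r$ to control the residual in $H^{2r}$), your Duhamel estimate via \eqref{residualderiv} and \eqref{localcomp1}, the $T^{1/2}$ contraction factor, and the Schwartz regularization are exactly the intended argument, so on those points there is nothing to object to.

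There is, however, a genuine gap in your handling of the mixed norm $\||x|^{s/2}U(t)v_0\|_{L^5_xL^{10}_T}$: the claim that the weight $s/2$ ``must be generated by the dispersion'' is backwards. The identities \eqref{pointwisetroca} and \eqref{temp} convert a weight on the \emph{evolution} into the same weight on the \emph{data} plus derivatives; iterating them, the principal term is always $U(t)(|x|^{s/2}v_0)$, which requires $|x|^{s/2}v_0\in L^2(\R)$ and is simply unavailable from $v_0\in Z_{s,r}$ when $r<s/2$. Nor can Lemma \ref{lemma} balance the intermediate pieces as you propose: a piece carrying $2j$ derivatives against a weight $s/2-j$ forces $\theta\ge 2j/s$ in the lemma, hence an available weight of at most $(1-2j/s)\,r=(s-2j)r/s$, and the requirement $(s-2j)r/s\ge (s-2j)/2$ amounts to $r\ge s/2$; so the balancing closes only when $s=2r$, and no argument can do better, since a flow that created spatial decay would (in the spirit of Theorem \ref{teo0}) create regularity beyond $H^s$ for generic data. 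Note that the paper's own $\mu^T$ in \eqref{class4p} carries $|x|^{s/2}$ in \emph{both} auxiliary slots and that the theorem is only ever invoked in the paper with $Z_{1,1/2}$, $Z_{2,1}$ and $Z_{n,n/2}$, i.e.\ in the balanced case $s=2r$, where $s/2=r$: with that reading your own scheme closes without modification, since $|x|^{r}U(t)v_0=U(t)(|x|^{r}v_0)+U(t)\{\Phi_{t,r}(\widehat{v}_0)\}^\vee$ combined with the group estimate $\|U(t)f\|_{L^5_xL^{10}_T}\le c\|f\|_2$ (the estimate behind the $\|D^s_xv\|_{L^5_xL^{10}_T}$ component of \eqref{class2dos}) and \eqref{residualestimate} bounds the mixed norm directly by $\|v_0\|_{Z_{s,r}}$, and the Duhamel term likewise. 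For $s>2r$ the honest conclusion carries the weight $r$, obtained by applying the balanced case at the level $Z_{2r,r}$ together with the unweighted $H^s$ theory; your proposal should be corrected accordingly rather than trying to manufacture the weight $s/2$ from the dispersion.
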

Essentially, the proof follows in a similar fashion as the one in Theorem \ref{localtheorymkdv} by considering the norm
\begin{equation}\label{class4p}
    \mu^T(v):= \Lambda^T(v)+ \||x|^{s/2}v\|_{L^5_xL^{10}_T} +\||x|^{s/2}v\|_{L^\infty_T L^2_x}
\end{equation}
and using Theorem \ref{troca}; we refer the reader to Theorem 1 in \cite{NAHASPONCE2} for details.

 

\section{Proof of Theorem \ref{teo0}}\label{theorem1.1proof}
The proof is presented in four cases, in which the first three are required to be done explicitly because of technical details involving the size of the truncated weights with respect to the regularity of the solution. The fourth case provides a construction of the solution in a general setting.
\begin{proof}[Proof of Theorem \ref{teo0}]
Without loss of generality we assume $t_0=0$. 
\subsection{Case $\alpha\in(0,1/2]$}\hfill

Let $\{v_{0m}\}_m\subset C_0^\infty(\R)$ be a sequence converging to $v_0$ in $H^{1/4}(\R)$. Denote with $v_m(\cdot)\in H^\infty(\R)$ the corresponding solution provided by Theorem \ref{localtheorymkdv} with initial data $v_{0m}$. By regularity of the data-solution map we can assume all the $v_m$'s are defined in $[0,T]$ with $v_m$ converging to $v$ in $C\left([0,T];H^{1/4}(\R)\right)$.

For $N\in \N$, denote 
\begin{equation*}
    \Tilde{\Tilde{\phi}}_{0,N}^\alpha (x):=
    \begin{cases}
        \langle x \rangle^{2\alpha}-1 \hspace{5mm}  x\in [0,N],\\
        (2N)^{2\alpha} \hspace{9mm}  x\in [3N,\infty).
    \end{cases}
\end{equation*}
Let $\Tilde{\phi}_{0,N}^\alpha$ be a smooth regularization of $\Tilde{\Tilde{\phi}}_{0,N}^\alpha$, defined in $[0,\infty)$, such that for $j=1,2,\dots$ we have $\left|\partial_x^j \Tilde{\phi}_{0,N}^\alpha\right|\le c$ with $c$ independent of $N$. Set $\phi_N\equiv \phi_{0,N}^\alpha$ to be the odd extension of $\Tilde{\phi}_{0,N}^\alpha$, that is, \begin{equation}
    \label{c0}
    \phi_N(x):=\begin{cases}
        \Tilde{\phi}_{0,N}^\alpha(x) \hspace{11mm}  x\in [0,\infty],\\
        -\Tilde{\phi}_{0,N}^\alpha(-x) \hspace{5mm}  x\in (-\infty,0).
    \end{cases}
\end{equation}
Note that $\phi_N^\prime\ge 0$.

Take the mKdV equation for $v_m$ and multiply it by $v_m \phi_N$. After integration by parts in space we get:
\begin{equation}\label{c0s}
    \frac{1}{2}\frac{d}{dt}\int v_m^2 \phi_N dx +\frac{3}{2}\int (\partial_x v_m)^2 \phi_N^\prime dx -\frac{1}{2}\int v_m^2 \phi_N^{(3)}dx -\frac{1}{4}\int v_m^4 \phi_N^\prime dx =0.
\end{equation}
Integrating over $[0,t_1]$ we get:
\begin{equation}
    \label{c1}
    \int v_m^2(t_1)\phi_N dx -\int v_{0m}^2 \phi_N dx + 3 \int_{0}^{t_1}\int (\partial_xv_m)^2\phi_N^\prime dxdt -\int_0^{t_1}\int v_m^2\phi_N^{(3)}dxdt-\frac{1}{2}\int_0^{t_1}\int v_m^4\phi_N^\prime dxdt
=0.    
\end{equation}
For $m$ large enough we have
\begin{equation}
    \label{c2}
    \left|-\int_0^{t_1}\int v_m^2\phi_N^{(3)}dxdt \right| \le c\int_0^{t_1}\int v^2_m dxdt\le c\|v_m\|_{L^2_{x{t_1}}}^2\le c\|v_{0m}\|_{L^2_{xt_1}}^2\le 2ct_1\|v_0\|_{2}^2.
\end{equation}
Also, 
\begin{equation}
    \label{c3}
    \left|-\frac{1}{2}\int_0^{t_1}\int v^4_m\phi_n^\prime dxdt \right|\le c\int_0^{t_1}\int v_m^4 dxdt \le ct_1\|v_m\|^4_{L^4_x L^\infty_{t_1}}\le c\|v_m\|_{L^4_xL^\infty_{t_1}}^4\le 2\|v\|_{L^4_xL^\infty_{t_1}}^4.
\end{equation}
From \eqref{c1}-\eqref{c3} we get that for $m\gg1$,
\begin{equation}\label{c3s}
\begin{split}
    \int_0^{t_1}\int (\partial_x v_m)^2 \phi_N^\prime dxdt& \le c(\|v_{0}\|^2_{2}+\|v\|^4_{L^4_x L^\infty_{t_1}})+\|v_{0}\phi_N^{1/2}\|_{L^2_x}^2+\|v(t_1)\phi_N^{1/2}\|_{L^2_x}^2\\ & \le c(\|v_{0}\|^2_{2}+\|v\|^4_{L^4_x L^\infty_{t_1}}+\|\langle x\rangle^{\alpha}v_{0}\|_{L^2_x}^2+\|\langle x\rangle^{\alpha}v(t_1)\|_{L^2_x}^2).
    \end{split}
\end{equation}
Thus 
\begin{equation}
    \label{c4}
    \limsup\limits_{m\to\infty}\int_0^{t_1}\int (\partial_xv_m)^2\phi_N^\prime dxdt \le M,
\end{equation}
where $M>0$ depends on $\|v_{0}\|_{Z_{1/4,\alpha}}$, $\|v(t_1)\langle x\rangle^{\alpha}\|_{L^2_x}$ and $\|v\|_{L^4_x L^\infty_{t_1}}$.

We claim that, for any fixed $N\in\N$, the left-hand side of \eqref{c3s} actually converges to $$\int_0^{t_1}\int (\partial_x v)^2\phi_N^\prime dxdt$$ as $m\to\infty$. Indeed,
\begin{equation}\label{c5}
\begin{split}
    \left|\int_0^{t_1}\int \left[(\partial_xv_m)^2-(\partial_xv)^2 \right]\phi_N^\prime dxdt \right|&\le c\int_0^{t_1}\int_{[-3N,3N]} |\partial_xv_m -\partial_x v||\partial_x v_m+\partial_xv|dxdt\\&\hspace{-10mm} \le c\|\partial_x v_m-\partial_x v\|_{L^2_{t_1}L^2_x[-3N,3N]}\|\partial_x v_m + \partial_x v\|_{L^2_{t_1}L^2_x[-3N,3N]}\\
    &\hspace{-10mm} \le c_N \|\partial_x v_m-\partial_x v\|_{L^\infty_xL^2_{t_1}}\left(\|\partial_x v_m\|_{L^\infty_xL^2_{t_1}}+\|\partial_x v\|_{L^\infty_xL^2_{t_1}} \right)\\
   & \hspace{-3mm} \xrightarrow[m\to\infty]{}0,
\end{split}
\end{equation}
where we used \eqref{class3tres} together with the continuous dependence on the initial data. 

In view of \eqref{c4}, we therefore have 
\begin{equation*}
    \int_0^{t_1}\int (\partial_x v)\phi_N^{\prime} dxdt \le M.
\end{equation*}

Since $\phi_N^\prime$ is even and for $x>1$ we have that $\phi_N^\prime(x)\xrightarrow[N\to\infty]{}2\alpha \langle x\rangle^{2\alpha-2}x\sim \langle x\rangle^{2\alpha-1}$, we deduce
\begin{equation}
    \label{c6}
    \int_0^{t_1}\int_{|x|>1} (\partial_x v)^2\langle x\rangle^{2\alpha-1}dxdt \le c \liminf\limits_{N\to \infty}\int_0^{t_1}\int_{|x|>1} (\partial_x v)^2\phi_N^\prime dxdt\le M.
\end{equation}
Moreover, 
\begin{equation}
    \label{c7}
    \int_0^{t_1}\int_{|x|<1} (\partial_x v)^2\langle x\rangle^{2\alpha-1} dxdt =\int_{|x|<1}\int_0^{t_1} (\partial_x v)^2\langle x\rangle^{2\alpha-1} dtdx \le c\|\partial_x v\|_{L^\infty_xL^2_{t_1}}^2< \infty.
\end{equation}

From \eqref{c6} and \eqref{c7} we conclude $$\int_0^{t_1}\int (\partial_x v)^2 \langle x\rangle^{2\alpha-1}dxdt<\infty,$$
which implies
\begin{equation}
    \label{c8} \langle x\rangle^{\alpha-1/2}\partial_x v \in L^2(\R) \mbox{  for almost every } t\in[0,t_1].
\end{equation}

Arguing in a similar fashion as done to obtain \eqref{c8} but using $\phi_N$ as the even extension of $\Tilde{\phi}_N^\alpha$ instead, it can be seen that for any $t\in[0,t_1]$ we have $\langle x\rangle^{\alpha}v\in L^2(\R)$ (see also \cite[page 143]{ILP2013}).

Set $t_*\in [0,t_1]$ so that $\langle x\rangle^{\alpha-1/2}\partial_x v(t_*)\in L^2(\R)$. By writing $f:=\langle x\rangle^{\alpha-1/2}v(t_*)$, from \eqref{c8} it can be seen that $J^1f\in L^2(\R)$.
From Lemma \ref{lemma}, for $\theta\in (0,1)$, we have that
\begin{equation}\label{interpp}
    \|J^\theta(\langle x\rangle^{(1-\theta)/2}f)\|_{2}\le c\|J^1f\|_2^\theta \|\langle x\rangle^{1/2}f\|_2^{1-\theta}<\infty.
\end{equation}
Taking $\theta=2\alpha$ we have that $\alpha-1/2+(1-\theta)/2=0$ and therefore we conclude $J^{2\alpha}v(t_*)\in L^2(\R)$, that is, \begin{equation}
    \label{ind} v(t_*)\in H^{2\alpha}(\R).
\end{equation} 

An iterative argument involving the proof of Theorem \ref{localtheorymkdv} with initial data $\tilde{v}_0=v(t_*)$ shows that $v\in C([0,T];H^{2\alpha}(\R))$. Moreover, using Theorem \ref{FLP2015} it can be seen that the fact $\langle x \rangle^{\alpha}v(t_*)\in L^2(\R)$ imply $v$ is in the class defined by \eqref{class1uno}-\eqref{class3tres} and \eqref{class4} with $Z_{2\alpha, \alpha}$ instead of $H^{2\alpha}(\R)$.

\subsection{Case $\alpha\in(1/2,1]$.}\hfill

Since $\alpha>1/2$, it can be seen that $|x|^{1/2}v(t_i)$ is in $L^2(\R)$ for $i=0,1$. Therefore, the conclusion of the previous case holds in $H^1(\R)$. In particular we know $v\in C\left([0,T];H^1(\R)\cap L^2(|x|dx) \right)$ with $\langle x\rangle^{\Tilde{\alpha}-1/2}\partial_x v$ in $L^2(\R)$ for almost every $t\in[0,t_1]$ and $\Tilde{\alpha} \in (0,1/2]$. We claim the latter now also holds for $\alpha\in(1/2,1]$.

To see this, let $\{v_{0m}\}_m\subset C_0^\infty(\R)$ be a sequence converging to $v_0$ in $Z_{1,1/2}$. Denote with $v_m(\cdot)\in H^\infty(\R)$ the corresponding solution provided by Theorem \ref{FLP2015} with initial data $v_{0m}$. By regularity of the data-solution map we can assume all the $v_m$'s are defined in $[0,T]$ with $v_m$ converging to $v$ in $C\left([0,T];Z_{1,1/2}\right)$. By following  the  same steps done to obtain \eqref{c8} we note that all remains equal except for the fact $|\phi_{0N}^\prime|$ is no longer bounded above independently of $N$. So, what is left is to estimate the terms involving $\phi_{N}^\prime$. In fact, instead of estimate \eqref{c3} we argue as follows:
\begin{equation*}
\begin{split}
    \int_0^{t_1}\int v_m^4\phi_N^\prime dxdt & \le c \int_0^{t_1} \|v_m\|^2_{L^\infty_x}\int v_m^2 \langle x\rangle dxdt\\ &\le c\|v_m\|^2_{L^\infty_{t_1}H^1}\|v_m\langle x\rangle^{1/2}\|^2_{L^2_{x{t_1}}}\le ct_1\|v_m\|^3_{L^\infty_{t_1}Z_{1,1/2}}\le 2c \|v\|^3_{L^\infty_{t_1}Z_{1,1/2}}.
\end{split}
\end{equation*}
Also, for estimate \eqref{c5} we note that $|\phi_N^\prime|\le c_N$ with $c_N$ depending on $N$. Since $N$ is fixed, the same computations remain valid.

Under these considerations, arguing as in \eqref{c0s}-\eqref{c7} it can be seen that for almost every $t\in [0,t_1]$ we have 
\begin{equation}
    \label{8s}\langle x\rangle^{\alpha-1/2}\partial_x v\in L^2(\R),
\end{equation}
which is our claim.
Moreover, a similar analysis shows that (without taking the limit as $m\to\infty$) for $m$ large enough
\begin{equation}\label{c9s}
    \|\langle x\rangle^{\alpha-1/2}\partial_x v_m\|_{L^2_{xt_1}}\le M,
\end{equation}
with $M$ depending on $\|v_0\|_{Z_{1,1/2}}$, $ \|\langle x\rangle^{\alpha-1/2}v(t_1)\|_{L^2_x}$ and $\|v\|_{L^\infty_{t_1}Z_{1,1/2}}$. Assume without loss of generality that \begin{equation}
    \label{abuse} \langle x \rangle^{\alpha-1/2} \partial_xv(t_i)\mbox{ are in } L^2(\R)\mbox{ for }i=0,1.
\end{equation}
Note in case \eqref{abuse} is not true, we can take a smaller subinterval $[t_0^*,t_1^*]\subset[0,t_1]$ in which the end points satisfy \eqref{abuse}.

Consider now $\phi_{N}\equiv \phi_{1,N}^\alpha$ build as done in \eqref{c0} but based on the function  \begin{equation*}
    \Tilde{\Tilde{\phi}}_{1,N}^\alpha (x):=
    \begin{cases}
        \langle x \rangle^{2\alpha-1}-1 \hspace{5mm}  x\in [0,N],\\
        (2N)^{2\alpha-1} \hspace{9mm}  x\in [3N,\infty).
    \end{cases}
\end{equation*}
Take $\partial_x$ to the mKdV equation and multiply it by $\partial_xv_m\phi_N$ to get
\begin{equation}
    \label{9b}
    \partial_t w_m w_m\phi_N+3\partial_x^3 w_m w_m\phi_N + 2v_mw_m^3\phi_N + v^2_mw_m\partial_x w_m \phi_N =0,
\end{equation}
where we denote $w_m:=\partial_x v_m$. Integrating by parts yields
\begin{equation*}
    \frac{1}{2}\frac{d}{dt}\int w_m^2 \phi_N dx +\frac{3}{2}\int (\partial_x w_m)^2 \phi_N^\prime dx -\frac{1}{2}\int w_m^2 \phi_N^{(3)}dx + \int 2v_mw_m^3\phi_N + \int v^2_mw_m\partial_x w_m \phi_N=0.
\end{equation*}
Note that, in terms of $w_m$, the first three terms above remain the same as in \eqref{c1} for the mKdV equation and the respective estimates are analogous. The only difference relies on the terms coming from the derivative of the nonlinearity $v^2_m\partial_x v_m$. Integrating by parts in the space variable we have that for $t\in[0,t_1]$:
\begin{equation*}
    2\int v_m w_m^3 \phi_Ndx=-2\int v_m^2w_m\partial_x w_m \phi_Ndx -\int v_m^2w_m^2\phi_N^\prime dx.
\end{equation*}
Since, for $m$ large enough,
\begin{equation}\label{c9bs}
    \left|\int v^2_m w_m^2 \phi_N^\prime dx \right|\le c\|v_m\|^2_{L^\infty_x}\|w_m\|^2_{L^2_x}\le c \|v_m\|^4_{L^\infty_{t_1} H^1}\le 2c\|v\|^4_{L^\infty_{t_1} H^1},    
\end{equation}
we only need to focus on the term $$\left|-\int_0^{t_1}\int v_m^2 w_m \partial_x w_m \phi_N dx dt \right|.$$
We proceed as follows: first we note that
\begin{equation}\label{c9}
    \begin{split}
        \left|-\int_0^{t_1}\int v_m^2 w_m \partial_x w \phi_N dx dt \right|&\le \|\partial_x w_m\|_{L^\infty_xL^2_{t_1}} \|w_m\phi_N^{1/2}\|_{L^2_{xt_1}} \|v_m^2 \phi_N^{1/2}\|_{L^2_xL^\infty_{t_1}}.
    \end{split}
\end{equation}
Let us estimate each term on right-hand side of \eqref{c9}. From the continuous dependence (see \eqref{class2dos} with $s=1$), for $m$ large enough, $$\|\partial_x^2v_m\|_{L^\infty_xL^2_{t_1}}\le 2c\|\partial_x^2v\|_{L^\infty_{x}L^2_{t_1}}. $$
Also, from \eqref{c9s} we know that,  for $m\gg1$, 
\begin{equation*}
   \|w_m\phi_N^{1/2}\|_{L^2_{xt_1}} \le c \|\langle x\rangle^{\alpha-1/2}\partial_x v_m\|_{L^2_{xt_1}}\le M.
\end{equation*}
Finally, for the term $\|v_m^2\phi_N^{1/2}\|_{L^2_xL^\infty_{t_1}}$ we note that $\|v_m^2\phi_N^{1/2}\|_{L^2_xL^\infty_{t_1}}\le \|v_m\phi_N^{1/4}\|^2_{L^4_xL^\infty_{t_1}}$.
Using the integral equation and the fact $|\phi_N|^{1/4}\le c\langle x\rangle^{3/8}$ we have that 
\begin{equation}\label{c10b}
    \|v_m\phi_N^{1/4}\|_{L^4_xL^\infty_{t_1}}\le \|\langle x\rangle^{3/8}U(t)v_{0m}\|_{L^4_xL^\infty_{t_1}}+\int_0^{t_1}\|\langle x\rangle^{3/8}U(t-t^\prime)v^2_m\partial_xv_m\|_{L^4_xL^\infty_{t_1}}dt^\prime .
\end{equation}
In view of \eqref{ww1} and \eqref{troca2} we obtain 
\begin{equation}\label{c10}
    \begin{split}
        \|\langle x\rangle^{3/8}U(t)v_{0m}\|_{L^4_xL^\infty_{t_1}}&\le \|U(t)\langle x\rangle^{3/8}v_{0m}\|_{L^4_xL^\infty_{t_1}} +\|U(t)\{\Phi_{t,3/8}(\widehat{u}_0)(\xi)\}^\vee\|_{L^4_xL^\infty_{t_1}}\\ &\le c\|D^{1/4}\langle x\rangle^{3/8}v_{0m}\|_{L^2_x} + c\|D^{1/4}\{\Phi_{t,3/8}(\widehat{u}_0)(\xi)\}^\vee\|_{L^2_x} \\ &\le c\|J^{1/4}\langle x\rangle^{3/8}v_{0m}\|_{L^2_x} + c\|D^{1/4}\{\Phi_{t,3/8}(\widehat{u}_0)(\xi)\}^\vee\|_{L^2_x}.
    \end{split}
\end{equation}
An interpolation using Lemma \ref{lemma} gives
\begin{equation}
    \label{c11} \|J^{1/4}_x \langle x\rangle^{3/8}f\|_{L^2_x}\le c\|J^1_xf\|^{1/4}_{L^2}\|\langle x\rangle^{1/2}f\|_{L^2}^{3/4}.
\end{equation}
Also, using \eqref{residualderiv} we get
\begin{equation}
    \label{c12} \|D^{1/4}\{\Phi_{t,3/8}(\widehat{f})(\xi)\}^\vee\|_{L^2_x}\le c(1+t_1)\|f\|_{H^1}.
\end{equation}
We combine \eqref{c11} and \eqref{c12} into \eqref{c10} to obtain, for $m$ large,
\begin{equation}\label{c13}
\begin{split}
    \|\langle x\rangle^{3/8}U(t)v_{0m}\|_{L^4_xL^\infty_{t_1}}& \le c \|J^1_xv_{0m}\|_{L^2_x}^{1/4}\|\langle x\rangle^{1/2}v_{0m}\|_{L^2_x}^{3/4}+c \|v_{0m}\|_{H^1}\\
    &\le 2c \|v_0\|_{Z_{1,1/2}}.
\end{split}
\end{equation}

For the other term in \eqref{c10b}, we argue in a similar fashion using \eqref{c11} and \eqref{c12} to get
\begin{equation}\label{c14}
    \begin{split}
        \int_0^{t_1}\|\langle x\rangle^{3/8}U(t-t^\prime)v_m^2\partial_x v_m\|_{L^4_xL^\infty_{t_1}}dt^\prime&\hfill \\&\hspace{-45mm}\le c\int_0^{t_1} \|J^1_x(v^2_m\partial_xv_m)\|^{1/4}_{L^2_x}\|\langle x\rangle^{1/2}v^2_m\partial_xv_m\|_{L^2_x}^{3/4}dt^\prime+c(1+{t_1})\int_0^{t_1}\|v^2_m\partial_xv_m\|_{H^1}dt^\prime \\
        &\hspace{-45mm} \le c\int_0^{t_1}\left( \|v^2_m\partial_xv_m\|_{H^1} + c \|\langle x\rangle^{1/2}v^2_m\partial_xv_m\|_{L^2_x}\right)dt^\prime.
    \end{split}
\end{equation}
According to \eqref{class2dos}, \eqref{class4} and \eqref{class4p}, for $m\gg1$,
\begin{equation}\label{c15b}
    \begin{split}
        \int_0^{t_1} \|\langle x\rangle^{1/2} v^2_m \partial_x v_m\|_{L^2_x}dt'&\le c\|\langle x\rangle^{1/2}\partial_x v_m v^2_m \|_{L^2_{xt_1}} \le c\|\langle x \rangle^{1/2}v_m\|_{L^5_{x}L^{10}_{t_1}} \|v_m\|_{L^4_xL^\infty_{t_1}} \|\partial_x v_m\|_{L^{20}_xL^{5/2}_{t_1}} \\& \le c \mu(v_m)^3 \le 2c\mu(v)^3,
    \end{split}
\end{equation}
where we used the continuous dependence on the initial data.  

Combining \eqref{localcomp1} and \eqref{c15b} it follows from \eqref{c14} that 
\begin{equation}\label{c15bb}
    \int_0^{t_1}\|\langle x\rangle^{3/8}U(t-t^\prime)v_m^2\partial_x v_m\|_{L^4_xL^\infty_{t_1}}dt^\prime \le M,
\end{equation}
where $M$ depends on several norms in which $v$ is known to be finite.
From \eqref{c10b}-\eqref{c15bb} we conclude 
\begin{equation}\label{c14s}
    \|v_m^2\phi_N^{1/2}\|_{L^2_xL^\infty_{t_1}}\le M.
\end{equation}

Gathering together \eqref{9b}-\eqref{c14s} we can emulate the argument in \eqref{c0s}-\eqref{c4} to conclude
\begin{equation*}
    \limsup\limits_{m\to\infty} \int_0^{t_1}\int(\partial_xw_m)^2\phi_N^\prime dxdt \le M,
\end{equation*}
with $M$ depending on $\|v_0\|_{Z_{1,1/2}}$ and  $\|\langle x\rangle^{\alpha}v(t_1)\|_{L^2_x}$ among other norms in which $v$ is known to be finite.

Note that the convergence argument done in \eqref{c5} can be emulated with $w_m$ instead of $v_m$ and using the continuous dependence in the norm $\|\partial^2_x v_m\|_{L^\infty_xL^2_{t_1}}$ provided by the local theory. 

Continuing as in \eqref{c6} and \eqref{c7} it can be seen that \begin{equation}
    \label{c15}\langle x\rangle^{\alpha-1}\partial_x w \in L^2(\R)\ \mbox{for almost every}\ t\in [0,t_1],
\end{equation}
where $w=\partial_{x}v$.
From \eqref{8s} and \eqref{c15} there exists $t_*\in[0,t_1]$ such that $\langle x\rangle^{1/2}f$ and $J^1f$ are in $L^2(\R)$, where $f$ denotes the function $\langle x\rangle^{\alpha-1}\partial_x v(t_*)$. Interpolating as in \eqref{interpp} with $\theta=2\alpha-1$ we conclude $v(t_*)\in H^{2\alpha}(\R)$. In addition, an iterative argument shows that $v\in C([0,T];H^{2\alpha}(\R))$. Moreover, it can be seen that $v$ is in the class defined by \eqref{class1uno}-\eqref{class3tres} and \eqref{class4} with $Z_{2\alpha, \alpha}$ instead of $H^{2\alpha}(\R)$.

\subsection{Case $\alpha\in(1,3/2]$.}\hfill

By taking  $\alpha=1$, applying the result of the previous case and the local theory in weighted spaces, we have that $v\in C\left([0,T];H^2(\R)\cap L^2(|x|^2dx) \right)$  with $\langle x\rangle^{1/2}\partial_x^k v(t)\in L^2(\R)$ for almost every $t\in [0,t_1]$ and $k=1,2$. 

We claim that $\langle x\rangle^{\alpha-1} \partial_x^2 v(t)$ is in $L^2(\R)$ for almost every $t\in[0,t_1]$. Indeed, let $\{v_{0m}\}_m\subset C_0^\infty(\R)$ be a sequence converging to $v_0$ in $Z_{2,1}$. Denote with $v_m(\cdot)\in H^\infty(\R)$ the corresponding solution provided by Theorem \ref{FLP2015} with initial data $v_{0m}$. By the regularity of the data-solution map again we may assume all the $v_m$'s are defined in $[0,T]$ with $v_m$ converging to $v$ in $C\left([0,T];Z_{2,1}\right)$. 

The idea is to emulate what was done from \eqref{9b} to \eqref{c15} noticing that $|\phi_{1,N}^\prime|$ is not longer uniformly bounded above independently of $N$. So, we re-estimate the related terms as follows:
instead of inequality \eqref{c9bs} we get
\begin{equation}\label{c16}
    \begin{split}
        \left|-\int_0^{t_1} \int v_m^2 (\partial_xv_m)^2\phi_{1N}^\prime dxdt  \right|& \le c\int_0^{t_1} \int v_m^2 (\partial_xv_m)^2 \langle x\rangle dxdt \le c\|\langle x\rangle^{1/2} v_m\|_{L^2_{x{t_1}}} \|\partial_x v_m\|_{L^{\infty}_{x{t_1}}}^2 \\
        &\le c \|\langle x \rangle^{1/2}v_m \|_{L^\infty_{t_1}L^2_x}\|v_m\|_{L^\infty_{t_1} H^2} \le c \|v_{m}\|^2_{Z_{2,1}}\le 2c \|v\|^2_{Z_{2,1}}.
    \end{split}
\end{equation}
Instead of \eqref{c9} we obtain
\begin{equation}
    \left|-\int_0^{t_1} \int v_m^2\partial_xv_m \partial_x^2v_m\phi_{1N} dxdt \right|\le \|\langle x\rangle^{3/2}v_m^2\|_{L^2_xL^\infty_{t_1}} \|\langle x\rangle^{1/2}\partial_x v_m\|_{L^2_{x{t_1}}} \|\partial_x^2 v_m\|_{L^\infty_xL^2_{t_1}}.
\end{equation}
Note that for $m$ large $\|\partial_x^2 v_m\|_{L^\infty_xL^2_{t_1}}\le c\|\partial_x^2 v\|_{L^\infty_xL^2_{t_1}}$, which is finite because of the local theory. Using \eqref{c9s} with $\alpha=1$, for $m\gg1$, it follows that $\|\langle x\rangle^{1/2}\partial_x v_m\|_{L^2_{xt_1}} \le M$, where $M$ depends on $\|v_{0}\|_{Z_{1,1/2}}$ and $\|\langle x\rangle^{1/2} v(t_1)\|_{L^2_x}$ among other norms for $v$. For the term $\|\langle x\rangle^{3/2}v_m^2\|_{L^2_xL^\infty_{t_1}}$ we have that 
\begin{equation}\label{c17}
    \begin{split}
        \|\langle x\rangle^{3/2}v_m^2\|_{L^2_xL^\infty_{t_1}}^{1/2}&\le\|\langle x\rangle^{3/4}v_m\|_{L^4_xL^\infty_{t_1}}\\&\le\|\langle x\rangle^{3/4}U(t)v_{0m}\|_{L^4_xL^\infty_{t_1}}+\int_0^{t_1} \|\langle x\rangle^{3/4}U(t-t^\prime)v^2_m \partial_xv_m\|_{L^4_xL^\infty_{t_1}} dt^\prime.
    \end{split}
\end{equation}
According to \eqref{ww1} and \eqref{troca2},
\begin{equation}
\begin{split}
    \|\langle x\rangle^{3/4}U(t)v_{0m}\|_{L^4_xL^\infty_{t_1}}&\le\|U(t)\langle x\rangle^{3/4}v_{0m}\|_{L^4_xL^\infty_{t_1}}+\|U(t)\{\Phi_{t,3/4}(\widehat{v}_{0m}(\xi)) \}^\vee\|_{L^4_xL^\infty_{t_1}}\\&\le c \|D^{1/4}_x \langle x\rangle^{3/4}v_{0m}\|_{L^2_x} + \|D^{1/4}_x \{\Phi_{t,3/4}(\widehat{v}_{0m}(\xi)) \}^\vee\|_{L^2_x}\\&\le c \|J^{1/4}_x \langle x\rangle^{3/4}v_{0m}\|_{L^2_x} + \|D^{1/4}_x \{\Phi_{t,3/4}(\widehat{v}_{0m}(\xi)) \}^\vee\|_{L^2_x}.
\end{split}
\end{equation}
Interpolating with $\theta=1/8$, it follows 
\begin{equation}
    \label{c18}
    \begin{split}
        \|J^{1/4}_x (\langle x\rangle^{3/4}v_{0m})\|_{L^2_x}&\le c\|J^2_{x}v_{0m}\|_{L^2_x}^{1/8}\|\langle x\rangle^{6/7}v_{0m}\|_{L^2_x}^{7/8}\\ &\le c \|v_{0m}\|_{H^2}+c\|\langle x\rangle v_{0m}\|_{L^2_x}\\ &\le  c\|v_{0m}\|_{Z_{2,1}}.
    \end{split}
\end{equation}
Also, by \eqref{residualderiv},
\begin{equation}\label{c19}
    \begin{split}
        \|D^{1/4}_x \{\Phi_{t,3/4}(\widehat{v}_{0m}(\xi)) \}^\vee\| \le c(1+t_1)\|v_{0m}\|_{H^{7/4}} \le c\|v_{0m}\|_{Z_{2,1}}.
    \end{split}
\end{equation}
Thus, for $m$ large enough $\|\langle x\rangle^{3/4}U(t)v_{0m}\|_{L^4_xL^\infty_{t_1}}\le 2c\|v_{0}\|_{Z_{2,1}}$.

For the remaining term in \eqref{c17}, we use \eqref{c11} and \eqref{c12} to get
\begin{equation}\label{c20b}
    \begin{split}
        \int_0^{t_1}\|\langle x\rangle^{3/4}U(t-t^\prime)v_m^2\partial_x v_m\|_{L^4_xL^\infty_{t_1}}dt^\prime&\hfill \\&\hspace{-45mm}\le c\int_0^{t_1} \|J^2_x(v^2_m\partial_xv_m)\|^{1/8}_{L^2_x}\|\langle x\rangle v^2_m\partial_xv_m\|_{L^2_x}^{7/8}dt^\prime+c(1+{t_1})\int_0^{t_1}\|v^2_m\partial_xv_m\|_{H^2}dt^\prime \\
        &\hspace{-45mm} \le c\int_0^{t_1}\left( \|v^2_m\partial_xv_m\|_{H^2} + c \|\langle x\rangle v^2_m\partial_xv_m\|_{L^2_x}\right)dt^\prime.
    \end{split}
\end{equation}
Using the continuous dependence on the initial data, for $m$ large enough, we have
\begin{equation}\label{c20bb}
    \begin{split}
        \int_0^{t_1} \|\langle x\rangle v^2_m \partial_x v_m\|_{L^2_x}dt'&\le c\|\langle x\rangle\partial_x v_m v^2_m \|_{L^1_{t_1}L^2_x} \le c\|\langle x \rangle^{1/2} \partial_x v_m\|_{L^2_{x{t_1}}} \|\langle x \rangle^{1/2}v_m\|_{L^\infty_{xt_1}} \|v_m\|_{L^2_{t_1}L^\infty_x} \\& \le c\|\langle x \rangle^{1/2} \partial_x v_m\|_{L^2_{x{t_1}}} \|v_m\|_{L^\infty_{t_1}H^1} \|J^1\langle x\rangle^{1/2} v_m\|_{L^\infty_{t_1}L^2_x}\\ &\le c \|\langle x \rangle^{1/2} \partial_x v_m\|_{L^2_{x{t_1}}} \|v_m\|_{L^\infty_{t_1}Z_{1,1/2}} (\|v_m\|_{L^\infty_{t_1}H^2}+\|\langle x\rangle v_m\|_{L^\infty_{t_1}L^2_x}) \le  M,
    \end{split}
\end{equation}
where we interpolated with $\theta=1/2$ and used \eqref{c9s} with $\alpha=1$. Note $M$ depends on $\|v\|_{L^\infty_{t_1}Z_{2,1}}$ among other norms in which $v$ is known to be finite. 
Combining \eqref{localcomp1} and \eqref{c20bb} it follows that, for $m\gg1$,
\begin{equation}\label{c20}
    \int_0^{t_1}\|\langle x\rangle^{3/4}U(t-t^\prime)v^2_m\partial_xv_m\|_{L^4_xL^\infty_{t_1}}dt^\prime \le M,
\end{equation}
where $M$ depends on $\|v_0\|_{Z_{2,1}}$ and  $\|\langle x\rangle^{\alpha}v(t_1)\|_{L^2_x}$ among other norms in which $v$ is known to be finite.

From \eqref{c17}-\eqref{c20} we conclude
\begin{equation}\label{c20ss}
    \|\langle x\rangle^{3/2}v_m^2\|_{L^2_xL^\infty_{t_1}}^{1/2}\le\|\langle x\rangle^{3/4}v_m\|_{L^4_xL^\infty_{t_1}}\le M.
\end{equation}
Under these two modifications, an analogous argument as the one developed in \eqref{9b}-\eqref{c15} supports that for almost every $t\in[0,t_1]$ we have 
\begin{equation}\label{c21}
    \langle x\rangle^{\alpha-1} \partial_x^2 v(t)\in L^2(\R),
\end{equation}
which proves our claim.

Note also that, if in the proof of \eqref{c21} the convergence in $m$ is skipped, we may see that for $m\gg1$
\begin{equation}
    \label{c21ss}
    \|\langle x\rangle^{\alpha-1} \partial_x^2 v_m\|_{L^2_{xt_1}}\le M,
\end{equation} with $M$ depending on norms in which $v$ is finite.

In what follows, without loss of generality, we assume  that $\langle x \rangle^{\alpha-1} \partial_x^2v(t_i)$ are in $L^2(\R)$ for $i=0,1$. Next we claim that $\langle x\rangle^{\alpha-3/2}\partial_x^3v\in L^2(\R)$ for almost every $t\in[0,t_1]$.

Indeed, denote $w_m:=\partial_x^2v_m$ and define $\phi_N\equiv \phi_{2,N}^\alpha$ as done in \eqref{c0} based on 
\begin{equation*}
    \Tilde{\Tilde{\phi}}_{2,N}^\alpha (x):=
    \begin{cases}
        \langle x \rangle^{2\alpha-2}-1 \hspace{5mm}  x\in [0,N],\\
        (2N)^{2\alpha-2} \hspace{9mm}  x\in [3N,\infty).
    \end{cases}
\end{equation*}
Taking $\partial_x^2$ to the mKdV equation and multiplying it by $w_m\phi_N$ we have
\begin{equation}
    \label{c22} \partial_tw_mw_m\phi_N +\partial_x^3w_m w_m\phi_N + \left(2(\partial_xv_m)^3+6v_m\partial_xv_mw_m+v_m^2\partial_xw_m \right)w_m\phi_N=0.
\end{equation}
Integrate \eqref{c22} by parts in space, after an extra integration over $[0,t_1]$, we arrive to
\begin{equation}\label{c23}
\begin{split}
    \int& w_{m}^2(0)\phi_N dx -\int w_m^2(t_1)\phi_N dx + 3\int_0^{t_1}\int (\partial_xw_m)^2\phi_N^\prime dxdt -\int_0^{t_1}\int w_m^2\phi_N^{(3)}dxdt+\\&4\int_0^{t_1}\int (\partial_x v_m)^3w_m\phi_Ndxdt+12\int_0^{t_1}\int v_m\partial_xv_m w_m^2\phi_Ndxdt +2\int_0^{t_1}\int v_m^2\partial_x w_mw_m\phi_Ndxdt=0
\end{split}
\end{equation}

Note \begin{equation}
    \left|4\int_0^{t_1}\int (\partial_xv_m)^3 w_m \phi_N dxdt \right| \le c\|\langle x\rangle^{1/2}\partial_x v_m\|_{L^2_{xt_1}}\|\langle x\rangle^{\alpha-1}w_m\|_{L^2_{xt_1}}\|\partial_xv_m\|_{L^{\infty}_{x{t_1}}}^2,
\end{equation}
where the terms $\|\langle x\rangle^{1/2}\partial_x v_m\|_{L^2_{xt_1}}$ and $\|\langle x\rangle^{\alpha-1} \partial_x^2v_m\|_{L^2_{xt_1}}$ were estimated in \eqref{c9s} and \eqref{c21ss}. The term $\|\partial_xv_m\|_{L^\infty_{x{t_1}}}$ may be estimated via Sobolev embedding.

We conclude for $m$ large enough that \begin{equation}
    \label{c24} \left|\int_0^{t_1}\int (\partial_xv_m)^3w_m\phi_Ndxdt \right|\le M,
\end{equation}
where $M$ depends on several norms related to $v$ such as $\|v_{0}\|_{Z_{2,1}}$ and $\|\langle x\rangle^{1/2}\partial_xv(t_1)\|_{L^2_x}$.

For the next term, using \eqref{c21ss}, we have that for $m$ large
\begin{equation}\label{c25}
    \begin{split}
        \left|12\int_0^{t_1}\int v_m \partial_xv_m w_m^2\phi_N dxdt \right|&\le c\|v_m\|_{L^\infty_{x{t_1}}}\|\partial_xv_m\|_{L^\infty_{x{t_1}}} \|w_m^2\phi_N\|_{L^1_{xt_1}}\\&\le c\|v_m\|_{L^\infty_{t_1}H^2}^2 \|\langle x\rangle^{\alpha-1}w_m\|^2_{L^2_{xt_1}} \le M.
    \end{split}
\end{equation}
On the other hand,
\begin{equation*}
    \left|2\int_0^{t_1} \int v_m^2 \partial_x w_m w_m\phi_Ndxdt \right|\le \|\partial_xw_m\|_{L^\infty_xL^2_{t_1}}\|\langle x\rangle v_m^2\|_{L^2_xL^\infty_{t_1}}\|w_m\|_{L^2_{x{t_1}}}.
\end{equation*}
By the local theory we have that for $m\gg1$ $$\|\partial_xw_m\|_{L^\infty_xL^2_{t_1}}\le c \|\partial^3_x v_m\|_{L^\infty_xL^2_{t_1}}\le 2c\|v\|_{L^\infty_{t_1}H^2}.$$
and $$\|w_m\|_{L^2_{x{t_1}}}\le c\|v\|_{L^\infty_{t_1}Z_{2,1}}.$$
Finally, for $\|\langle x\rangle v_m^2\|_{L^2_xL^\infty_{t_1}}\le\|\langle x\rangle^{1/2}v_m\|_{L^4_xL^\infty_{t_1}}^2$ we note
\begin{equation}
    \|\langle x\rangle^{1/2}v_m\|_{L^4_xL^\infty_{t_1}}\le \|\langle x\rangle^{3/4}v_m\|_{L^4_xL^\infty_{t_1}}\le M,
\end{equation}
where we used \eqref{c20ss}. We conclude that for $m$ large enough,
\begin{equation}
    \label{c26}
     \left|2\int_0^{t_1} \int v_m^2 \partial_x w_m w_m\phi_Ndxdt \right|\le M.
\end{equation}

Combining \eqref{c24}-\eqref{c26} into \eqref{c23} and arguing as in \eqref{c0s}-\eqref{c7} it can be seen that \begin{equation*}
    \int_0^{t_1}\int (\partial_x w)^2 \langle x\rangle^{2\alpha-2} dx <\infty,
\end{equation*}
that is, \begin{equation}\label{c27}
    \langle x\rangle^{\alpha-3/2}\partial_x^3v(t)\in L^2(\R) \mbox{ for almost every }t\in[0,t_1].
\end{equation}
Using \eqref{c21} and \eqref{c27} it follows via interpolation that for some $t_*\in[0,t_1]$ we have \begin{equation}
    \label{c27ss} v(t_*)\in H^{2\alpha}(\R).
\end{equation}
The latter implies $v\in C\left( [0,T];H^{2\alpha}(\R)\cap L^2(|x|^{2\alpha}dx)\right)$ as before.

\subsection{Case $\alpha\in(r/2,(r+1)/2]$, $r\ge3$.}\hfill

It is enough to prove by induction that for any $n\in\{2,3,\dots,r\}$ it follows that for all $\beta$ in $(\frac{n}{2},\frac{n+1}{2}]$ with $\beta\le\alpha$, we have 
\begin{equation}\label{induc}
    \begin{cases}
        \langle x\rangle^{\beta-n/2}\partial_x^n v(t) \in L^2(\R), \\ 
        \langle x\rangle^{\beta-n/2-1/2}\partial_x^{n+1} v(t) \in L^2(\R),
    \end{cases}
\end{equation}
for almost every $t$ in a closed subinterval $I$ of $[0,t_1]$.
In such case, taking $n=r$ and $\beta=\alpha$ and interpolating as in \eqref{interpp}, there would exists $t_{*}\in [0,t_1]$ such that $v(t_{*})\in H^{2\alpha}(\R)$, which leads to the desired conclusion.

Note the ``base'' case ($n=2$) follows from \eqref{c21} and \eqref{c27}. 
For the inductive step, assume \eqref{induc} is valid for $n-1$. We will prove it is also valid for $n$. 
Using the inductive hypothesis with $\beta=n/2$, it can be seen that, for $f:=\langle x\rangle^{\beta-n/2+1/2}\partial_x^n v$, we have $J^1f$ and $\langle x\rangle f$ are in $L^2(\R)$.
Interpolating as in \eqref{interpp}, there exists $t_{**}\in [0,t_1]$ such that $v(t_{**})\in H^{n}(\R)$, which  implies that $v\in C([0,T];H^n(\R))$. Moreover, since $\alpha>n/2$, using Theorem \ref{FLP2015} it can be seen that $v\in C([0,T];Z_{n,n/2})$. 

Recall, via Sobolev embedding, that for $k=1,2,\dots,n-1$: \begin{equation}
    \label{c29} \|\partial_x^k v\|_{L^\infty_x}\le c\|v\|_{H^n}.
\end{equation} 
In addition, using Lemma \ref{HA2} and interpolation, for any $t\in[0,T]$ we have that
\begin{equation}
    \label{c30}\|\langle x\rangle^j\partial_x^kv\|_{L^2_x}\le c\|v\|_{H^n}^\theta \|\langle x\rangle^{n/2}v\|^{1-\theta}_{L^2_x}\le c\|v\|_{Z_{n,n/2}},
\end{equation}
where $\theta=k/n$ and $j=(n-k)/2$.

Without loss of generality assume $I=[0,t_1]$. 
Let $\beta\in(\frac{n}{2},\frac{n+1}{2}]$ with $\beta \le \alpha$. We claim that $\langle x\rangle^{\beta-n/2}\partial_x^n v(t) \in L^2(\R)$ for almost every $t\in[0,t_1]$. In fact, let $\{v_{0m}\}_m\subset C_0^\infty(\R)$ be a sequence converging to $v_0$ in $Z_{n,n/2}$. Denote with $v_m$ the corresponding solution provided by Theorem \ref{FLP2015} with initial data $v_{0m}$, which  in view of the regularity of the data-solution map may be assumed to be defined in $[0,T]$ and $v_m$ converges to $v$ in $C\left([0,T];Z_{n,n/2}\right)$. 

Consider $\phi_N\equiv\phi_{(n-1),N}^\beta$ built as in \eqref{c0} from the function 
\begin{equation*}
    \Tilde{\Tilde{\phi}}_{(n-1),N}^\beta (x):=
    \begin{cases}
        \langle x \rangle^{2\beta-n+1}-1, \hspace{5mm}  x\in [0,N],\\
        (2N)^{2\beta-n+1}, \hspace{9mm}  x\in [3N,\infty).
    \end{cases}
\end{equation*}
Taking $\partial_x^{n-1}$ to the mKdV equation for $v_m$ we get $$\partial_t z_m + \partial_x^3 z_m + \partial_x^{n-1}(v^2_m\partial_x v_m)=0, \hspace{3mm} z_m:=\partial_x^{n-1}v_m.$$ We focus only on the nonlinear terms. Since $$\partial_x^{n-1}(v^2_m\partial_x v_m)=\sum\limits_{k\le n-1}\sum\limits_{j\le k}\binom{n-1}{k}\binom{k}{j}\partial_x^jv_m \partial_x^{k-j}v_m \partial_x^{n-k}v_m,$$
 after multiplication by $\partial_x^{n-1}v_m\phi_N$ and integration we get
 \begin{equation}\begin{split}
     \label{c31b} \int_0^{t_1}\int \partial_{x}^{n-1}v_m &\partial_x^{n-1}(v^2_m\partial_xv_m)\phi_Ndxdt\\&=\sum\limits_{k\le n-1}\sum\limits_{j\le k}\binom{n-1}{k}\binom{k}{j}\int_0^{t_1}\int \partial_x^{n-1}v_m \partial_x^{n-k}v_m \partial_x^j v_m \partial_x^{k-j}v_m \phi_N dxdt. \\ &=: \sum\limits_{k\le n-1}\sum\limits_{j\le k}\binom{n-1}{k}\binom{k}{j} A_{k,j}.
     \end{split}
 \end{equation}
We now estimate the terms $A_{k,j}$. It is necessary to split into some cases:

\noindent \textbf{Case A1, if $k=0$}. 
\begin{equation}\label{c31}
\begin{split}
    |A_{0,0}|&\le \|\partial_x^{n-1}v_m\|_{L^\infty_{x{t_1}}}\|v_m\phi_N^{1/4}\|_{L^\infty_{x{t_1}}}\|\partial_x^n v_m\|_{L^2_{x{t_1}}}\|v_m\phi_N^{3/4}\|_{L^2_{x{t_1}}}\\&\le c\|v_m\|_{L^\infty_{t_1}H^2}^2\|J^1\langle x\rangle^{1/2}v_m\|_{L^\infty_{t_1}L^2_x}\|\langle x\rangle^{3/2}v_m\|_{L^\infty_{t_1}L^2_x}.
    \end{split}
\end{equation}
Note that interpolating with $\theta=1/n$ we have
\begin{equation}\label{rhcp}\begin{split}
    \|J^1\langle x\rangle^{1/2}v_m\|_{L^\infty_{t_1}L^2_x}&\le 
    c\|v_m\|_{L^\infty_{t_1}H^n}^\theta \|\langle x \rangle^{n/(2(n-1))}v_m\|_{L^\infty_{t_1}L^2_x}^{1-\theta}\le c\|v_{m}\|_{L^\infty_{t_1}Z_{n,n/2}}. 
    \end{split}
\end{equation}
Using that $3/2\le n/2$, \eqref{c31} and \eqref{rhcp} we conclude that for $m$ large enough
\begin{equation}
    \label{c32} |A_{0,0}|\le c\|v\|^{4}_{L^\infty_{t_1}Z_{n,n/2}}.
\end{equation}

\noindent \textbf{Case A2, if $k\neq0$}.

Note in this case, since $n\ge3$ we have $|\phi_N|\le c\langle x\rangle^{2}\le c\langle x\rangle^{n-k/2}$. Using this and \eqref{c29}-\eqref{c30} we have, for any $j\in \{0,1,\dots,n-1\}$ and $m\gg1$, that
\begin{equation}\label{c33}
    \begin{split}
        |A_{k,j}|&\le c\|\partial_x^{n-1}v_m\|_{L^\infty_{x{t_1}}} \|\partial_x^{n-k}v_m\|_{L^\infty_{x{t_1}}} \|\langle x\rangle^{(n-j)/2}\partial_x^jv_m\|_{L^2_{x{t_1}}} \|\langle x\rangle^{(n-k+j)/2}\partial_x^{k-j}v_m\|_{L^2_{x{t_1}}}\\&\le c\|v_m\|_{L^\infty_{t_1}H^n}^2\|v_m\|^{2\theta}_{L^\infty_{t_1} H^n} \|\langle x\rangle^{n/2}v_m\|_{L^\infty_{t_1} L^2_x}^{2-2\theta} \le c\|v\|^4_{L^\infty_{t_1}Z_{n,n/2}}.
    \end{split}
\end{equation}
From \eqref{c32} and \eqref{c33} and arguing for $z_m$ analogous to what was done in \eqref{c0s}-\eqref{c8} for $v_m$ we see that
\begin{equation}\label{c35}
    \langle x\rangle^{\beta-n/2} \partial_x^n v(t) \in L^2(\R) \mbox{ for almost every }t\in[0,t_1],
\end{equation}
which proves our claim.

By arguing in a similar fashion, we also obtain that for $m$ large enough
\begin{equation}\label{c35ss}
    \|\langle x\rangle^{\beta-n/2} \partial_x^n v_m(t)\|_{L^2_{xt_1}} \le M,
\end{equation}
where $M$ depends on $\|v\|_{L^\infty_{t_1}Z_{n,n/2}}$, $\|v_0\|_{Z_{n,n/2}}$ and $\|\langle x\rangle^{\beta-n/2} z_m(t_1)\|_{L^2_x}$.

From \eqref{c35} there exist $t_{0}^*<t_{1}^*$ so that $\langle x\rangle^{\alpha-n/2}\partial_x^{n}v(t_{0}^*)$ and $\langle x\rangle^{\alpha-n/2}\partial_x^{n}v(t_{1}^*)$ are in $L^2(\R)$. For simplicity (and without loss of generality) we denote $[t_{0}^*,t_{1}^*]$ with $[0,t_1]$ again.

Next, we claim that $\langle x\rangle^{\beta-(n+1)/2}\partial_x^{n+1}v(t)$ is in $L^2(\R)$ for almost every $t\in[0,t_1]$. Indeed, construct $\phi_N\equiv\phi_{n,N}^\beta$ as in \eqref{c0} but based on the function 
\begin{equation*}
    \Tilde{\Tilde{\phi}}_{n,N}^\beta (x):=
    \begin{cases}
        \langle x \rangle^{2\beta-n}-1, \hspace{5mm}  x\in [0,N],\\
        (2N)^{2\beta-n}, \hspace{9mm}  x\in [3N,\infty).
    \end{cases}
\end{equation*}
Denote $w_m:=\partial_x^n v_m$. Take $\partial_x^n$ to the mKdV equation for $v_m$ and multiply it by $w_m\phi_N$. Again, we will take care only on the nonlinear term, which is this case is 
\begin{equation*}
    \begin{split}
        \int_0^{t_1}\int w_m\partial_x^n(v^2_m\partial_xv_m)\phi_Ndxdt&=\sum\limits_{k\le n}\sum\limits_{j\le k}\binom{n}{k}\binom{k}{j}\int_0^{t_1}\int w_m \partial_x^{n+1-k}v_m \partial_x^jv_m\partial_x^{k-j}v_m\phi_Ndxdt \\ &\equiv\sum\limits_{k\le n}\sum\limits_{j\le k}\binom{n}{k}\binom{k}{j}B_{k,j}.
    \end{split}
\end{equation*}

We now split into three cases:

\noindent \textbf{Case B1, if $k=0$}. 

Using \eqref{class2dos}, \eqref{c14s} and \eqref{c35ss} we have that for $m$ large enough
\begin{equation*}
    \begin{split}
        |B_{0,0}|&\le c\|\partial_x^{n+1}v_m\|_{L^\infty_xL^2_{t_1}}\|w_m\phi_N^{1/2}\|_{L^2_{xt_1}}\|v_m^2\phi_N^{1/2}\|_{L^2_xL^\infty_{t_1}}\\&\le c\|\partial_x^{n+1} v_m\|_{L^\infty_xL^2_{t_1}}\|\langle x\rangle^{\beta-n/2}w_m\|_{L^2_{xt_1}}\|v_m \langle x \rangle^{1/4}\|^2_{L^4_xL^\infty_{t_1}}\le M,
    \end{split}
\end{equation*}
where $M$ depends on $\|v\|_{L^\infty_{t_1}Z_{n,n/2}}$ among other norms in which $v$ is finite. 

\noindent \textbf{Case B2, if $k=n$}. 

Note that in case $j=0$ or $j=n$, using \eqref{c35ss} we have for $m\gg1$
\begin{equation*}\begin{split}
    |B_{n,n}|&=|B_{n,0}|\le c\|\partial_x v_m\|_{L^\infty_{x{t_1}}}\|v_m\|_{L^\infty_{x{t_1}}}\|\langle x\rangle^{\beta-n/2}w_m\|^2_{L^2_{xt_1}}\\&\le c \|v_m\|^2_{L^\infty_{t_1}H^n} \|\langle x\rangle^{\beta-n/2}w_m\|^2_{L^2_{xt_1}} \le M
    \end{split}
\end{equation*}
Also, if $j\in\{1,\dots,n-1\}$ we use \eqref{c29}, \eqref{c30} and \eqref{c35} to get for $m$ large
\begin{equation*}
    \begin{split}
        |B_{n,j}|&\le c \|\langle x\rangle^{\beta-n/2}w_m\|^2_{L^2_{xt_1}}\|\langle x\rangle^{1/2}\partial_xv_m\|_{L^2_{x{t_1}}}\|\partial_x^jv_m\|_{L^\infty_{x{t_1}}} \|\partial_x^{n-j}v_m\|_{L^\infty_{x{t_1}}}\\&\le c \|v_m\|^2_{L^\infty_{t_1}H^n} \|\langle x\rangle^{\beta-n/2}w_m\|^2_{L^2_{xt_1}} \|v_m\|_{L^\infty_{t_1}Z_{n,n/2}}\\&\le M.
    \end{split}
\end{equation*}

\noindent\textbf{Case B3, if $k\in\{1,\dots,n-1\}$}.

In this case we argue in a similar manner as done in Case B2. Namely, for $m\gg1$,
\begin{equation*}
    \begin{split}
        |B_{k,j}|&\le c \|\langle x\rangle^{\beta-n/2}w_m\|^2_{L^2_{xt_1}}\|\langle x\rangle^{\beta-n/2}\partial_x^{n+1-k}v_m\|_{L^2_{xt_1}}\|\partial_x^jv_m\|_{L^\infty_{x{t_1}}} \|\partial_x^{k-j}v_m\|_{L^\infty_{x{t_1}}}\\&\le c \|v_m\|^2_{L^\infty_{t_1}H^n} \|\langle x\rangle^{\beta-n/2}w_m\|^2_{L^2_{xt_1}} \|\langle x\rangle^{\beta-n/2}\partial_x^{n+1-k}v_m\|_{L^2_{xt_1}}\\&\le M.
    \end{split}
\end{equation*}

With these estimates of the nonlinear term in hand, it can be seen (as in \eqref{c0s}-\eqref{c7}) that
\begin{equation*}
    \int_0^{t_1} \int (\partial_x^{n+1}v)^2\langle x\rangle^{2\beta-(n+1)}dxdt<\infty,
\end{equation*}
which implies \begin{equation}\label{c36}
    \langle x\rangle^{\beta-(n+1)/2}\partial_x^{n+1}v(t)\in L^2(\R)\mbox{ for almost every }t\in[0,t_1],
\end{equation}
which establishes our claim. From \eqref{c35} and \eqref{c36} the inductive step follows, and the proof of the theorem is completed.
\end{proof}

Note that, under minor modifications, the proof of Theorem \ref{teo0} remains valid for $v_0$ in $H^s(\R)$ for any $s\ge 1/4$. In fact, the following corollary is a direct consequence of this fact.

\begin{corollary}\label{color}
    Let $v_0\in H^s(\R)$, $s\ge 1/4$. Let $v$ be the global in-time solution provided by Theorem \ref{global}. Assume there exist $t_0, t_1 \in \R$ and $\alpha>0$ such that 
     \begin{equation*}
     |x|^{\frac{s}{2}+\alpha} v(t_0)\in L^2(\R) \mbox{ and } |x|^{\frac{s}{2}+\alpha} v(t_1) \in L^2(\R). 
\end{equation*} 
Then $v\in C\left( [-T,T]; H^{s+2\alpha}(\R)\right)$, for any $T>0$.
\end{corollary}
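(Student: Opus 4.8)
The plan is to deduce Corollary \ref{color} from the general-regularity version of Theorem \ref{teo0} (valid for initial data in $H^s(\R)$, $s\ge 1/4$, as noted in the remark preceding the statement) together with the global well-posedness of Theorem \ref{global}. The essential observation is that the decay--regularity relation $s'=2b$ is homogeneous: a solution with baseline regularity $s$ and weight exponent $b$ gains regularity up to $H^{2b}(\R)$. Setting $b=s/2+\alpha$ turns the hypothesis $|x|^{s/2+\alpha}v(t_i)\in L^2(\R)$ into exactly the decay condition required by the theorem, while the output regularity becomes $2b=s+2\alpha$.

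First I would reduce to the case $t_0<t_1$ and, using the time-translation invariance of the autonomous mKdV equation, apply the generalized Theorem \ref{teo0} on the interval $[t_0,t_1]$ with weight parameter $b=s/2+\alpha$. Since $2b=s+2\alpha>s\ge 1/4$, this lies in the genuine-gain regime, and the bootstrap argument of Section \ref{theorem1.1proof} produces a time $t_*\in[t_0,t_1]$ for which $v(t_*)\in H^{s+2\alpha}(\R)$; in fact the accompanying iterative application of the local theory yields $v\in C\left([t_0,t_1];H^{s+2\alpha}(\R)\right)$.

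Next I would propagate this regularity to all of $\R$. Because $s+2\alpha\ge 1/4$, Theorem \ref{global} furnishes a global solution $\tilde v\in C\left(\R;H^{s+2\alpha}(\R)\right)$ with $\tilde v(t_*)=v(t_*)$. Uniqueness in $H^s(\R)$, which applies since $H^{s+2\alpha}(\R)\hookrightarrow H^s(\R)$, forces $\tilde v\equiv v$ on all of $\R$. Consequently $v\in C\left(\R;H^{s+2\alpha}(\R)\right)$, and in particular $v\in C\left([-T,T];H^{s+2\alpha}(\R)\right)$ for every $T>0$, as claimed.

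The main obstacle is not in this short deduction but in the generalized form of Theorem \ref{teo0} that it invokes: one must check that the weighted energy estimates and the inductive bootstrap of Section \ref{theorem1.1proof}, together with the weighted local theory (Theorem \ref{FLP2015}) and the linear estimates of Theorem \ref{troca}, all go through with $H^s$ in place of $H^{1/4}$ throughout. Granting those minor modifications, the remaining care is purely bookkeeping: confirming that the regularity produced at $t_*$ is at least $1/4$ (so that global well-posedness and uniqueness apply in the correct space) and that the two-sided global flow extends the locally obtained gain to the full interval $[-T,T]$.
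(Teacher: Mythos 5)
Your proposal is correct, and its core is the same reduction the paper uses: feed the hypothesis into Theorem \ref{teo0} with the enlarged decay parameter $\tilde\alpha=\frac{s}{2}+\alpha$, so that the output regularity is $2\tilde\alpha=s+2\alpha$. The one real difference is that you route the argument through a generalized, $H^s$-data version of Theorem \ref{teo0} and flag verifying that generalization as the main obstacle; the paper's proof sidesteps this entirely. Since $s\ge 1/4$ gives $H^s(\R)\hookrightarrow H^{1/4}(\R)$, the paper simply regards $v$ as the $H^{1/4}(\R)$ solution (the two coincide by uniqueness of the local theory) and applies the already-proved Theorem \ref{teo0} verbatim with decay parameter $\tilde\alpha=\frac{s}{2}+\alpha$; no re-run of the bootstrap with $H^s$ in place of $H^{1/4}$ is needed, because the conclusion $H^{2\tilde\alpha}(\R)$ already carries the full $s+2\alpha$ regularity. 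So the ``minor modifications'' remark preceding the corollary, which you treat as load-bearing, is in fact dispensable for this particular deduction. Your second step---globalizing the gain by solving forward and backward from $v(t_*)\in H^{s+2\alpha}(\R)$ via Theorem \ref{global} (applicable since $s+2\alpha\ge 1/4$) and matching with $v$ by uniqueness---is more explicit than the paper, which merely takes $T>\max\{|t_0|,|t_1|\}$ and asserts $v\in C\left([-T,T];H^{2\tilde\alpha}(\R)\right)$; your version is a legitimate and arguably cleaner justification of that step. In sum, both arguments are sound: the paper's is shorter because it trades your generalized theorem for the trivial embedding, while yours makes the time-propagation bookkeeping airtight.
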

\begin{proof}
  Without loss of generality assume $T>\max\{|t_0|,|t_1|\}$. Note that $u_0\in H^{{\frac{1}{4}}}(\R)$ with the respective solution $v\in C([-T,T]; H^{{\frac{1}{4}}}(\R))$ so that $|x|^{\tilde{\alpha}}v(t_i)$ are in $L^2(\R)$ for $i=0,1$ and $\tilde{\alpha}=\frac{s}{2}+\alpha>0$. By Theorem \ref{teo0} we obtain that $v\in C([-T,T]; H^{2\tilde{\alpha}}(\R))$. The result then follows since $2\tilde{\alpha}=s+2\alpha$.
\end{proof}

\section*{Acknowledgments}
This work was financed in part by the Coordenação de Aperfeiçoamento de Pessoal de N\'ivel Superior - Brasil (CAPES) - Finance Code 001. A.P. was partially supported by Conselho Nacional de Desenvolvimento Cient\'ifico e Tecnol\'ogico - Brasil (CNPq) grant 2019/02512-5 and Fundação de Amparo à Pesquisa do Estado de São Paulo - FAPESP grant 2019/02512-5.

\section*{data availability statement}

 All data generated or analysed during this study are  available within the article.


\end{document}